\theoremstyle{plain}
\newtheorem*{thma}{Theorem A}
\newtheorem{thm}{Theorem}
\newtheorem{lemma}[thm]{Lemma}
\newtheorem{prop}[thm]{Proposition}
\theoremstyle{definition}
\newtheorem{ex}[thm]{Example}
\theoremstyle{remark}
\numberwithin{equation}{section}
\begin{document}
\title[Multi-Opponent James Functions]
{Multi-Opponent James Functions}
\author[C. N. B. Hammond and W. P. Johnson]{Christopher N. B. Hammond and Warren P. Johnson}

\date{June 10, 2015}
\address{Department of Mathematics\\
Connecticut College\\
New London, CT 06320}
\email{cnham@conncoll.edu}
\email{wpjoh@conncoll.edu}

\maketitle
\thispagestyle{empty}

\begin{abstract}
The James function, also known as the ``log5 method," assigns a probability to the result of a competition between two teams based on their respective winning percentages.
This paper, which builds on earlier work of the authors and Steven J.\ Miller, explores the analogous situation where a single team or player competes simultaneously against multiple opponents.
\end{abstract}

\section{Introduction}\label{S:intro}

In his 1981 \textit{Baseball Abstract} \cite{james}, Bill James posed the following problem:
suppose two teams $A$ and $B$ have winning percentages $a$ and $b$ respectively, having played
equally strong schedules in a game such as baseball where there are
no ties.  If $A$ and $B$ play each other, what is the probability $P(a,b)$ that $A$ wins?
James proposed a method of answering  this question (the so-called ``log5 method"), with an equivalent formulation given by Dallas 
Adams.

\begin{thma}
The probability that a team with winning percentage $a$ defeats a team with winning
percentage $b$ is given by the function
\begin{equation}\label{jamesfunction}
P(a,b)=\frac{a(1-b)}{a(1-b)+b(1-a)}\text{,}
\end{equation}
except when $a=b=0$ or $a=b=1$, in which case the probability is undefined.
\end{thma}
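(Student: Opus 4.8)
The plan is to \emph{derive} the formula from natural first principles rather than to verify a given expression, since Theorem A is really a modeling assertion about what the log5 probability ought to be. Two principles suffice. The first is \emph{calibration}: a winning percentage should equal the probability of beating a perfectly average opponent, so that $P(a,1/2)=a$. This encodes the hypothesis that the percentages were accumulated against balanced schedules. The second is an \emph{odds-multiplicativity} principle, which I take to be the content of Adams' reformulation: the odds that $A$ beats $B$ should be the odds that $A$ beats an average team, multiplied by the odds that an average team beats $B$.

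Concretely, write $r(a)=a/(1-a)$ for the odds that a team with percentage $a$ defeats an average opponent. Calibration gives that an average team beats $B$ with probability $1-b$, so the odds of that event are $(1-b)/b$. The multiplicativity principle then reads
$$\frac{P(a,b)}{1-P(a,b)} = \frac{a}{1-a}\cdot\frac{1-b}{b} = \frac{a(1-b)}{b(1-a)}\text{.}$$
Solving this single equation for $P(a,b)$ by clearing denominators produces \eqref{jamesfunction} directly. As a bonus, swapping $a$ and $b$ shows the two odds are reciprocals, which forces $P(a,b)+P(b,a)=1$; thus the expected consistency condition emerges as a consequence rather than a separate hypothesis.

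The main obstacle is conceptual rather than computational: because this is a modeling claim, the genuine work lies in isolating the right axioms and arguing that odds-multiplicativity is the natural formalization of James' intuition. Once those are fixed, the algebra is routine. The only remaining care is with the degenerate cases, where the denominator $a(1-b)+b(1-a)$ vanishes exactly when $a=b=0$ or $a=b=1$—precisely the situations in which the two teams assert incompatible claims, each never winning or each always winning—so the probability is rightly left undefined there.
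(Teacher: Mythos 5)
Your derivation is correct, but it takes a genuinely different route from the paper's. The proof the paper relies on (Miller's, recapped at the start of Section \ref{S:formula} when it is generalized to $n$ opponents) is model-based: each team independently draws a $1$ with probability equal to its winning percentage, the round is decisive when exactly one team draws a $1$, and otherwise the draw repeats; conditioning on a decisive round gives $P(a,b)=a(1-b)/\bigl(a(1-b)+b(1-a)\bigr)$ directly from the functional equation $P=a(1-b)+(1-M)P$ with $M=a(1-b)+b(1-a)$. Your route is axiomatic: calibration $P(a,\tfrac12)=a$ plus odds-multiplicativity, which is precisely the substitution formula (Luce's product rule) specialized to $c=\tfrac12$ --- in other words, you have independently reproduced the computation in Lemma \ref{sublem}. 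Each approach buys something: the generative model supplies a mechanism that makes the formula plausible as a prediction and extends with no extra ideas to the $n$-opponent setting, which is exactly how the paper obtains $P_n$; your axiomatic derivation instead isolates the structural properties that \emph{force} the formula, anticipating the uniqueness results of Section \ref{S:odds}, but it relocates the modeling burden onto justifying odds-multiplicativity as the correct formalization of James's intuition, which is the one step you acknowledge but do not discharge. One small caution: your claim that $P(a,b)+P(b,a)=1$ ``emerges as a consequence'' is slightly overstated, since you already invoked complementarity against the average opponent to say that an average team beats $B$ with probability $1-b$; in a two-team game with no ties this is harmless, but it is an input rather than an output. Your treatment of the degenerate cases is correct: the denominator vanishes exactly when $a=b=0$ or $a=b=1$.
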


\noindent Adopting Adams's terminology, we refer to (\ref{jamesfunction}) as the \textit{James function}.  A proof of Theorem A was given much later by Steven J.\ Miller; see \cite[Theorem 1]{hjm}.  The purpose of the current paper is to extend this result to a multi-opponent game involving a protagonist $A$ and $n$ opponents
$B_{1},B_{2},\ldots,B_{n}$. In other words, we will find a formula for what we will call a \textit{multi-opponent James function}, or more specifically an \textit{$n$-opponent James function}.  In addition, we will study many interesting structural properties that hold for such functions for $n\geq 2$.

One of the distinctive aspects of the treatment in \cite{hjm} was an emphasis on the following properties, known as the \textit{James conditions}:
\begin{enumerate}[label=(\alph*)]
\item $P(a,\frac12)=a$.
\item $P(a,0)=1$ for $0<a\leq 1$.
\item $P(b,a)=1-P(a,b)$.
\item $P(1-b,1-a)=P(a,b)$.
\item $P(a,b)$ is a non-decreasing function of $a$ for $0\leq b\leq 1$ and a strictly increasing function of $a$ for $0<b<1$.
\end{enumerate}
These properties apply to all points $(a,b)$ with $0\leq a\leq 1$ and $0\leq b\leq 1$, except for $(0,0)$ and $(1,1)$.
The James function satisfies all five of these conditions, as well as another that is worthy of attention:
 \begin{equation}\label{involutive}
 P(a,b)=c\hspace{.1in}\text{ if and only if }\hspace{.1in}P(a,c)=b\text{,}
\end{equation}
as long as $0<a<1$. This condition can be stated in several equivalent forms, including
 \begin{equation}\label{involutive2}
 P(a,b)=1-c\hspace{.1in}\text{ if and only if }\hspace{.1in}P(c,b)=1-a
\end{equation}
for $0<b<1$.

Any function that satisfies conditions (a) to (e) is known as a \textit{Jamesian function}.  Any Jamesian function
that satisfies (\ref{involutive}) is called an \textit{involutive Jamesian function}.  As we shall see, the multi-opponent James functions satisfy a similar set of conditions.

\section{The functions $P_{n}$}\label{S:formula}

Before we attempt to find a formula for the multi-opponent James functions, we will need to define more precisely what we mean by ``winning percentage"
in this context.  Even though our goal is to describe the outcome of a multi-opponent game, we will think of the winning percentage as relating to a competition between
only two players.  From a practical point of view, this quantity can be obtained by treating any game with $n+1$ competitors (that is, one protagonist and $n$ opponents) as $\binom{n+1}{2}$ separate single-opponent competitions.  In other words, a competitor that finishes third out of ten would be credited with two losses and seven wins.
Based on this interpretation, the functions we are about to describe are most directly applicable to competitions where there is minimal interaction among the
competitors.  For example, bowling, sprinting, swimming, and crew would all be reasonable candidates for this model; horse racing and short track skating would not.

We are now in a position to establish a formula for $P_{n}$, the $n$-opponent James function.  The question makes no sense if all the winning percentages are $0$
or at least two of them are $1$.  If exactly one of them is $1$, then that competitor always wins, so we can exclude
that case also.  Otherwise, we can model this situation in the same manner as in the proof of Theorem A.  In other words, independently
assign either a $0$ or a $1$ to each competitor, where $A$ draws $1$ with probability $a$ and each $B_{i}$
draws $1$ with probability $b_{i}$.  If exactly one competitor draws a $1$, then it is declared the winner,
and otherwise we repeat the procedure.  The protagonist $A$ wins on the first draw with probability 
$a(1-b_{1})(1-b_{2})\cdots(1-b_{n})$, and the probability that \emph{some} competitor wins on the first draw is
\begin{multline*}
a(1-b_{1})(1-b_{2})\cdots(1-b_{n})+(1-a)(1-b_{1})(1-b_{2})\cdots(1-b_n)\sum_{i=1}^{n}\frac{b_i}{1-b_i}\\
=\left(\prod_{i=1}^{n}(1-b_i)\right)\left(a+(1-a)\sum_{i=1}^{n}\frac{b_i}{1-b_i}\right)\text{.}
\end{multline*}
If we call this quantity $M$ for a moment, the probability $P_{n}(a;b_{1},b_{2}\ldots,b_{n})$ that $A$
wins must satisfy the functional equation
\[
P_{n}(a;b_{1},b_{2},\ldots,b_{n})=a(1-b_{1})(1-b_{2})\cdots(1-b_{n})
+\left(1-M)P_{n}(a;b_{1},b_{2},\ldots,b_{n}\right)\text{,}
\]
or
\[
M\,P_{n}(a;b_{1},b_{2},\dots,b_{n})=a(1-b_{1})(1-b_{2})\cdots(1-b_{n})\text{.}
\]
Since by assumption none of the $b_{i}$ is $1$, we may solve this equation to obtain the following result.

\begin{thm}
The probability that a protagonist with winning percentage $a$ defeats $n$ opponents with winning
percentages $b_{1}, b_{2}, \ldots, b_{n}$ is given by the function
\begin{equation}\label{multi1}
P_{n}(a;b_{1},b_{2},\ldots,b_{n})=\frac{a}{a+(1-a)\displaystyle\sum\limits_{i=1}^n\frac{b_i}{1-b_{i}}}\text{.}
\end{equation}
\end{thm}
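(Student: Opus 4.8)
The plan is to model the multi-opponent contest as a sequence of independent, identically distributed rounds, exactly mirroring Miller's renewal-style proof of Theorem~A. In each round every competitor independently draws a $0$ or a $1$ (the protagonist $A$ drawing $1$ with probability $a$ and each $B_i$ with probability $b_i$); a round is \emph{decisive} when exactly one competitor draws $1$, in which case that competitor is declared the winner, and otherwise the round is discarded and the procedure repeats. Because successive rounds are independent and the contest restarts in an identical state after every indecisive round, the probability $P_n(a;b_1,\ldots,b_n)$ that $A$ eventually wins should satisfy a self-similarity (renewal) equation, and the whole proof reduces to assembling the relevant first-round probabilities and solving for $P_n$.

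First I would compute the probability that $A$ wins outright in a single round: this requires $A$ to draw $1$ and every opponent to draw $0$, giving $a\prod_{i=1}^n(1-b_i)$. Next I would compute $M$, the probability that \emph{some} competitor wins the round. Opponent $B_i$ wins precisely when $A$ draws $0$, $B_i$ draws $1$, and every other opponent draws $0$, which has probability $(1-a)b_i\prod_{j\neq i}(1-b_j)=(1-a)\left(\prod_{j=1}^n(1-b_j)\right)\frac{b_i}{1-b_i}$. Adding $A$'s win probability to the sum of these $n$ terms and factoring out $\prod_{i=1}^n(1-b_i)$ reproduces the expression for $M$ displayed above.

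With these ingredients in hand, conditioning on the outcome of the first round gives the functional equation
\[
P_n(a;b_1,\ldots,b_n)=a\prod_{i=1}^n(1-b_i)+(1-M)\,P_n(a;b_1,\ldots,b_n),
\]
since $A$ either wins immediately or the round is indecisive (probability $1-M$) and the contest resumes with $A$'s winning probability again equal to $P_n$. Rearranging yields $M\,P_n=a\prod_{i=1}^n(1-b_i)$, and because the hypotheses forbid any $b_i$ from equaling $1$, the factor $\prod_{i=1}^n(1-b_i)$ is strictly positive and cancels from $P_n=a\prod_{i=1}^n(1-b_i)/M$, leaving precisely formula~(\ref{multi1}).

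The step I expect to require the most care is not the algebra but the justification of the renewal equation itself: one must confirm that $M>0$, so that a decisive round occurs almost surely and $P_n$ is genuinely well-defined as the limit of a geometric process, rather than an artifact of dividing by zero. This positivity follows from the standing nondegeneracy assumptions---that the winning percentages are not all $0$ and that at most one of them equals $1$---which guarantee both that $\prod_{i=1}^n(1-b_i)>0$ and that the second factor $a+(1-a)\sum_{i=1}^n b_i/(1-b_i)$ is strictly positive. Once this is in place, the derivation is entirely formal.
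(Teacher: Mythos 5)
Your proposal is correct and follows essentially the same route as the paper: the same repeated-draw model, the same computation of the one-round win probability and of $M$, the same renewal equation $P_n = a\prod_{i=1}^n(1-b_i) + (1-M)P_n$, and the same cancellation of $\prod_{i=1}^n(1-b_i)$ to reach (\ref{multi1}). Your added remark that $M>0$ under the standing nondegeneracy assumptions is a reasonable point of extra care but does not change the argument.
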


There are several alternate representations for $P_{n}$, such as
\begin{equation}\label{multi2}
P_{n}(a;b_{1},b_{2},\ldots,b_{n})=\frac{a\displaystyle\prod_{i=1}^{n}(1-b_{i})}
{a\displaystyle\prod_{i=1}^{n}(1-b_{i})+\sum_{j=1}^{n}b_{j}(1-a)\prod_{\substack{i=1,\\ i\neq j}}^{n}(1-b_{i})}\text{,}
\end{equation}
which is an analog of (\ref{jamesfunction}).  Perhaps the most useful representation is
\begin{equation}\label{trig}
P_{n}(a;b_{1},b_{2},\ldots,b_{n})=\frac{q(a)}{q(a)+\displaystyle\sum_{i=1}^{n}q(b_{i})}\text{,}
\end{equation}
where
\begin{equation}\label{qform}
q(s)=\frac{s}{1-s}\text{.}
\end{equation}
It is worth noting that $q$ is a constant multiple of the ``log5 function" originally introduced by James.

As we mentioned earlier, we will call $P_{n}$ the \textit{$n$-opponent James function}, referring to $P=P_{1}$ as the \textit{single-opponent James function}
when appropriate.  Even though it was derived under the assumption that
none of the winning percentages is $1$, the function $P_{n}$ as stated in (\ref{multi2}) has the correct behavior when exactly one winning percentage
(either $a$ or one of the $b_{i}$) is $1$.  For the duration
of this paper, we will extend the definition of $P_{n}$ to include these cases.

Several important properties are apparent from the various formulae for $P_{n}$:
\begin{enumerate}[label=(\Alph*)]
\item $P_{n}(a;\frac{1}{n+1},\frac{1}{n+1},\ldots,\frac{1}{n+1})=a$.
\item $P_{n}(a;b_{1},b_{2},\ldots,b_{n-1},0)=P_{n-1}(a;b_{1},b_{2},\ldots,b_{n-1})$.
\item $\displaystyle\sum_{i=1}^{n}P_{n}(b_{i};a,b_{1},b_{2},\ldots,b_{i-1},b_{i+1},\ldots,b_{n})=1-P_{n}(a;b_{1},b_{2},\ldots,b_{n})$.
\item $P_{n}(\underbrace{1-b;1-b,\ldots,1-b}_{k\text{ terms}},\underbrace{1-a,1-a,\ldots,1-a}_{n+1-k\text{ terms}})=P_{n}(\underbrace{a;a,\ldots,a}_{k\text{ terms}},\underbrace{b,b,\ldots,b}_{n+1-k\text{ terms}})$ when $0<a<1$ and $0<b<1$, for any $1\leq k\leq n$.
\item $P_{n}(a;b_{1},b_{2},\ldots,b_{n})$ is a non-increasing function of $b_{1}$, and a
strictly decreasing function of $b_{1}$ when $0<a<1$ and $0\leq b_{i}<1$ for $2\leq i\leq n$.
\item Permuting the values $b_{1},b_{2},\ldots,b_{n}$ does not affect the value of $P_{n}(a;b_{1},b_{2},\ldots,b_{n})$.
\end{enumerate}
Condition (D) may be most easily obtained from (\ref{trig}), with the additional observation that $q(1-s)=1/q(s)$.
Note that condition (E) can be translated to any $b_{j}$ for $2\leq j\leq n$; thus it implies that
$P_{n}(a;b_{1},b_{2},\ldots,b_{n})$ is a non-decreasing function of $a$, and a
strictly increasing function of $a$ when $0\leq b_{i}<1$ for $1\leq i\leq n$ and at least
one $b_{i}$ is nonzero.

We will refer to this list as the \textit{multi-James conditions}.
Conditions (A) to (E) may be viewed as generalizations of James conditions (a) to (e) respectively.  (The relationship between (B) and
(b) can best be understood by thinking of the function $P_{0}$ as being identically $1$.)  Condition (F), which is new to the multi-opponent situation,
dictates that the order in which one considers the opponents is unimportant.

Note that (A) does not provide a necessary condition for $P_{n}(a;b_{1},b_{2},\ldots,b_{n})=~a$ when $n\geq 2$.  In particular,
the following proposition may be obtained from (\ref{trig}).

\begin{prop}
 If $\sum_{i=1}^{n}q(b_{i})=1$, then $P_{n}(a;b_{1},b_{2},\ldots,b_{n})=a$.  The converse holds when $0<a<1$.
 \end{prop}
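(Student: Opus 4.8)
The plan is to work entirely from the representation (\ref{trig}), since both implications reduce to a single algebraic identity satisfied by $q$. Abbreviate $S=\sum_{i=1}^{n}q(b_{i})$, so that (\ref{trig}) reads $P_{n}(a;b_{1},\ldots,b_{n})=q(a)/(q(a)+S)$. For the forward implication I would substitute the hypothesis $S=1$ into this expression to obtain $P_{n}=q(a)/(q(a)+1)$, and then invoke the fundamental identity $q(a)/(q(a)+1)=a$. This identity is immediate from (\ref{qform}): writing $q(a)=a/(1-a)$, the denominator $q(a)+1$ simplifies to $1/(1-a)$, so the quotient collapses to $a$. (At the boundary $a=1$, where $q(a)$ is undefined, the conclusion $P_{n}=1=a$ follows instead from the extended definition of $P_{n}$.) No restriction on $a$ is therefore needed in this direction.

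For the converse, assuming $0<a<1$ and $P_{n}=a$, I would clear denominators in $q(a)/(q(a)+S)=a$ to get $q(a)=a\,q(a)+aS$, hence $q(a)(1-a)=aS$. Since $0<a<1$, the value $q(a)=a/(1-a)$ is finite and strictly positive, and in fact $q(a)(1-a)=a$; thus the relation becomes $a=aS$, and dividing by $a\neq 0$ yields $S=1$, as claimed.

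The computation itself presents no real obstacle; the only point that genuinely requires attention is the role of the hypothesis $0<a<1$ in the converse, which is exactly what guarantees that $q(a)$ is a well-defined positive number available for cancellation. This restriction cannot be removed: if $a=0$ then $P_{n}=0=a$ for any choice of the $b_{i}$ with $S\neq 0$, while the extended convention gives $P_{n}=1=a$ whenever $a=1$, so in each of these degenerate cases $P_{n}(a;b_{1},\ldots,b_{n})=a$ holds without forcing $\sum_{i=1}^{n}q(b_{i})=1$. I would close by noting these boundary cases to explain why the converse is asserted only on the open interval $0<a<1$.
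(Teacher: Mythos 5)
Your proof is correct and follows the same route the paper intends: the paper gives no explicit argument, stating only that the proposition ``may be obtained from (\ref{trig}),'' and your computation with $q(a)/(q(a)+1)=a$ and the cancellation of $q(a)(1-a)=a$ is precisely that verification. Your closing remarks on the boundary cases $a=0$ and $a=1$ correctly explain why the converse is restricted to $0<a<1$.
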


The function $P_{n}$ also satisfies a generalization of (\ref{involutive2}), the alternate version of the involutive property.

\begin{prop}\label{pseudo}
As long as $0\leq b_{i}<1$ for all $1\leq i\leq n$ and at least one $b_{i}$ is nonzero, we have that
\[
P_{n}(a;b_{1},b_{2},\ldots,b_{n})=1-c\hspace{.1in}\text{ if and only if }\hspace{.1in}P_{n}(c;b_{1},b_{2},\ldots,b_{n})=1-a\text{.}
\]  
\end{prop}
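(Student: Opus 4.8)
The plan is to work entirely from the representation (\ref{trig}), writing $S=\sum_{i=1}^{n}q(b_{i})$. Since $0\leq b_{i}<1$ for every $i$, each $q(b_{i})=b_{i}/(1-b_{i})$ is a well-defined nonnegative number, and because at least one $b_{i}$ is nonzero we have $S>0$; this positivity is precisely what keeps every denominator below from vanishing. With this notation, (\ref{trig}) reads $P_{n}(a;b_{1},\ldots,b_{n})=q(a)/\bigl(q(a)+S\bigr)$, and I would first record the complementary identity
\[
1-P_{n}(a;b_{1},\ldots,b_{n})=\frac{S}{q(a)+S}\text{.}
\]

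The key observation is that, for $a,c\in(0,1)$, both of the stated equalities are equivalent to the single \emph{symmetric} condition $q(a)\,q(c)=S$. To establish the forward implication, suppose $P_{n}(a;b_{1},\ldots,b_{n})=1-c$. By the complementary identity this says $c=S/\bigl(q(a)+S\bigr)$, whence $1-c=q(a)/\bigl(q(a)+S\bigr)$ and therefore $q(c)=c/(1-c)=S/q(a)$; that is, $q(a)q(c)=S$. (Note that the resulting $c$ automatically lies in $(0,1)$, so no separate boundary issue arises here.) Feeding $q(c)=S/q(a)$ back into (\ref{trig}) gives
\[
P_{n}(c;b_{1},\ldots,b_{n})=\frac{q(c)}{q(c)+S}=\frac{1}{1+q(a)}=1-a\text{,}
\]
using the elementary fact $q(a)/\bigl(1+q(a)\bigr)=a$. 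Because the condition $q(a)q(c)=S$ is symmetric in $a$ and $c$, the reverse implication follows at once by interchanging their roles, so the biconditional holds throughout the open square.

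It remains to deal with the boundary values $a\in\{0,1\}$ or $c\in\{0,1\}$, where $q$ is either zero or undefined and the symmetric reformulation no longer applies directly. Here I would simply verify the equivalence by hand from the extended definition of $P_{n}$: for instance $P_{n}(1;b_{1},\ldots,b_{n})=1$ forces $c=0$, and then $P_{n}(0;b_{1},\ldots,b_{n})=0=1-a$ confirms the other side, with the cases $a=0$ and $c\in\{0,1\}$ handled symmetrically. I expect these endpoint checks, rather than the central computation, to be the only place requiring genuine care, precisely because they fall outside the range in which the clean identity $q(a)q(c)=S$ is available.
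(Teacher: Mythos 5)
Your proof is correct and rests on the same key idea as the paper's: reducing the stated equation to a condition symmetric in $a$ and $c$ (your $q(a)q(c)=S$ is exactly the paper's $ac=(1-a)(1-c)\sum_{i=1}^{n}b_{i}/(1-b_{i})$). The only real difference is that the paper works from (\ref{multi1}) rather than (\ref{trig}); since the denominator $a+(1-a)\sum_{i=1}^{n}b_{i}/(1-b_{i})$ is nonzero for every $a\in[0,1]$ once some $b_{i}$ is positive, the symmetric reformulation there covers the endpoints automatically, and the separate boundary checks you perform become unnecessary.
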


\begin{proof}
Since the denominator in (\ref{multi1}) is guaranteed to be nonzero, the expression
\[
P_{n}(a;b_{1},b_{2},\ldots,b_{n})=1-c
\]
is equivalent to
\[
a=a(1-c)+(1-a)(1-c)\sum_{i=1}^{n}\frac{b_i}{1-b_i}
\]
and also
\[
ac=(1-a)(1-c)\sum_{i=1}^n\frac{b_i}{1-b_i}\text{.}
\]
The last equation is symmetric in $a$ and $c$, which proves our assertion.
\end{proof}

The treatment in \cite{hjm} made considerable use of the level curves for the single-opponent James function, as well as for other Jamesian functions.  While we will
not provide a complete description of the level sets for $P_{n}$, we make the following observation.

\begin{prop}\label{level}
The expression $P_{n}(a;b_{1},b_{2},\ldots,b_{n})$ is equal to
\[
P_{n}\!\left(\frac{ta}{1+(t-1)a};\frac{tb_{1}}{1+(t-1)b_{1}},\frac{tb_{2}}{1+(t-1)b_{2}},\ldots,\frac{tb_{n}}{1+(t-1)b_{n}}\right)
\]
for any real number $t>0$.
\end{prop}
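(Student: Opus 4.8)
The plan is to work entirely with the representation (\ref{trig}), since the substitution appearing in the statement interacts cleanly with the function $q$ from (\ref{qform}). Write $\phi_t(s)=\frac{ts}{1+(t-1)s}$ for the common transformation applied to each argument. The whole proposition then reduces to understanding what $\phi_t$ does to a single value of $q$.

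First I would compute $q(\phi_t(s))$ directly. The numerator of $\phi_t(s)$ is $ts$, and a short calculation gives $1-\phi_t(s)=\frac{1-s}{1+(t-1)s}$, so the factors $1+(t-1)s$ cancel and one finds
\[
q(\phi_t(s))=\frac{\phi_t(s)}{1-\phi_t(s)}=\frac{ts}{1-s}=t\,q(s)\text{.}
\]
In other words, $\phi_t$ acts on the ``log5 coordinate'' $q$ simply as multiplication by $t$. This single identity is the only step requiring any algebra, and it is where I expect whatever mild difficulty there is to reside.

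With this identity in hand, substituting $\phi_t(a),\phi_t(b_1),\ldots,\phi_t(b_n)$ into (\ref{trig}) replaces every $q(a)$ by $t\,q(a)$ and every $q(b_i)$ by $t\,q(b_i)$. The common factor of $t$ then cancels from numerator and denominator, returning the original value $P_n(a;b_1,b_2,\ldots,b_n)$. I would also remark that the hypothesis $t>0$ guarantees that each $\phi_t(s)$ lies in $[0,1]$ when $s$ does and that the denominator in (\ref{trig}) stays positive, so no degeneracy is introduced by the substitution.

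Since the computation is so brief, there is no substantial obstacle beyond carefully verifying $q(\phi_t(s))=t\,q(s)$; the remainder is a one-line cancellation. The conceptual point worth emphasizing is that $\{\phi_t\}_{t>0}$ is precisely the one-parameter family of substitutions that rescale $q$, which is exactly why it preserves the level sets of $P_n$.
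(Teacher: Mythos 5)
Your proposal is correct and follows exactly the paper's own argument: the paper likewise proves the identity by observing that $q\!\left(\frac{ts}{1+(t-1)s}\right)=tq(s)$ and then cancelling the common factor of $t$ in the representation (\ref{trig}). Your verification of the key identity and the remark about $t>0$ simply fill in details the paper leaves implicit.
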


\begin{proof}
This identity follows from (\ref{trig}), along with the observation that
\[
q\!\left(\frac{ts}{1+(t-1)s}\right)=tq(s)
\]
for any positive value of $t$.
\end{proof}

\bigskip

We conclude this section by observing two additional properties of $P_{n}$.  First of all, note that
\begin{align}
\nonumber\frac{P_{n}(b;a,c_{2},c_{3},\ldots,c_{n})}{P_{n}(a;b,c_{2},c_{3},\ldots,c_{n})}
&=\frac{\displaystyle\frac{q(b)}{q(b)+q(a)+\displaystyle\sum_{i=2}^{n}q(c_{i})}}{\displaystyle\frac{q(a)}{q(a)+q(b)+\displaystyle\sum_{i=2}^{n}q(c_{i})}}\\
&=\frac{q(b)}{q(a)}
=\frac{\displaystyle\frac{q(b)}{q(b)+q(a)}}{\displaystyle\frac{q(a)}{q(a)+q(b)}}=\frac{P(b,a)}{P(a,b)}\label{irrelevant}
\end{align}
for $n\geq 2$.  This property, which can be described as \textit{independence from irrelevant alternatives}, may be extended to include all cases where $0<a\leq 1$ and $0\leq b<1$,
as long as $0\leq c_{i}<1$ for $2\leq i\leq n$.  Similarly, consider the \textit{odds ratio} defined
in the following manner:
\[
\frac{\displaystyle\frac{P_{m}(a;c_{1},c_{2},\ldots,c_{m})}{1-P_{m}(a;c_{1},c_{2},\ldots,c_{m})}}
{\displaystyle\frac{P_{n}(a;b_{1},b_{2},\ldots,b_{n})}{1-P_{n}(a;b_{1},b_{2},\ldots,b_{n})}}
=\frac{P_{m}(a;c_{1},c_{2},\ldots,c_{m})\bigl(1-P_{n}(a;b_{1},b_{2},\ldots,b_{n})\bigr)}{\bigl(1-P_{m}(a;c_{1},c_{2},\ldots,c_{m})\bigr)P_{n}(a;b_{1},b_{2},\ldots,b_{n})}\text{,}
\]
where all the winning percentages belong to the interval $(0,1)$.  Note that the preceding expression may be rewritten
\[
\frac{\displaystyle\left(\frac{q(a)}{q(a)+\displaystyle\sum_{i=1}^{m}q(c_{i})}\right)
\displaystyle\left(\frac{\displaystyle\sum_{i=1}^{n}q(b_{i})}{q(a)+\displaystyle\sum_{i=1}^{n}q(b_{i})}\right)}
{\displaystyle\left(\frac{\displaystyle\sum_{i=1}^{m}q(c_{i})}{q(a)+\displaystyle\sum_{i=1}^{m}q(c_{i})}\right)
\displaystyle\left(\frac{q(a)}{q(a)+\displaystyle\sum_{i=1}^{n}q(b_{i})}\right)}
=\frac{\displaystyle\sum_{i=1}^{n}q(b_{i})}{\displaystyle\sum_{i=1}^{m}q(c_{i})}\text{.}
\]
In other words, as long as the same value of $a$ is plugged into both $P_{n}$ and $P_{m}$, the odds ratio is independent of $a$.  This property
can be viewed as a complement to independence from irrelevant alternatives, relating instead to independence from the protagonist.


\section{Relationship to the Bradley--Terry--Luce model}

One of the main issues discussed in \cite{hjm} is the connection between the James function and the Bradley--Terry model, one of
the fundamental tools in the theory of paired comparisons.  It is not surprising that the multi-opponent James functions have a similar affinity to
a corresponding extension of this model (due primarily to Luce \cite{luce}).
Let $T$ denote a finite collection of possible outcomes in a situation requiring a single
selection or choice.  Under certain assumptions (see Theorems 3 and 4 in \cite{luce}), Luce posits that the probability of an outcome $x$ being preferred over
the other elements of $T$ is given by the formula
\begin{equation}\label{lucemodel}
P_{T}(x)=\frac{v(x)}{\displaystyle\sum_{y\in T}v(y)}\text{,}
\end{equation}
where $v$ is a positive, real-valued function (which he calls a \textit{ratio scale}).  This representation is sometimes referred to as the
\textit{Bradley--Terry--Luce model} or the \textit{strict utility model}.

While (\ref{lucemodel}) is highly reminiscent of (\ref{trig}),
Luce conceives of the ratio scale $v$ in a somewhat different manner from our function $q$.  In particular, $v$ is a constant multiple of a probability
that depends on the situation one is considering,
rather than a predetermined function.  This difference reflects the fact that our approach assumes an \textit{a priori} notion
of the worth of a competitor, namely its winning percentage along with the corresponding value of $q$, whereas in Luce's construction
any notion of worth must be inferred from the outcomes of particular competitions.  Nevertheless,
there are several formal similarities between our results and Luce's, which we shall note explicitly.  In the next section, we will follow Luce's example
by restating the probability associated with a multi-opponent competition in terms of a set of single-opponent competitions.

\section{Formulae for $P_{n}$}\label{S:formulae}

We will now introduce several additional formulae for a multi-opponent James function $P_{n}$, primarily in terms of the single-opponent
James function $P=P_{1}$.  Throughout this section, unless otherwise stated,
we will assume that all the winning percentages in our propositions and formulae belong to the interval $(0,1)$.
This is not a major constraint, as a winning percentage of $0$ or $1$ would either determine the value of $P_{n}$ automatically or allow
$P_{n}$ to be reduced to $P_{n-1}$ (see also Lemma \ref{zerolem} below).

We begin with an elementary observation.  Since $P$ satisfies James condition (c), it follows that
\begin{equation}\label{funid}
\frac{P(b,a)}{P(a,b)}=\frac{1}{P(a,b)}-1
\end{equation}
for $0<a\leq 1$ and $0\leq b<1$.  For the remainder of this paper, we will use the quantities in (\ref{funid}) interchangeably.

We will now establish two fundamental identities.  Consider the representation for $P_{n}$ given in (\ref{trig}), with $q$ defined
as in (\ref{qform}).  It follows from (\ref{irrelevant}) that
\begin{align*}
\frac{1}{P_{n}(a;b_{1},b_{2},\ldots,b_{n})}-1&=\frac{q(a)+\displaystyle\sum_{i=1}^{n}q(b_{i})}
{q(a)}-\frac{q(a)}{q(a)}\\
&=\sum_{i=1}^{n}\frac{q(b_{i})}{q(a)}\\&=\sum_{i=1}^{n}\frac{P(b_{i},a)}{P(a,b_{i})}
\text{.}
\end{align*}
Hence we have obtained our first major identity, which is analogous to Theorem 1 in \cite{luce}.
\begin{prop}[\textbf{Sum Formula}]\label{sumform}
\begin{align*}
\frac{1}{P_{n}(a;b_{1},b_{2},\ldots,b_{n})}-1=&\sum_{i=1}^{n}\frac{P(b_{i},a)}{P(a,b_{i})}\\
=&\sum_{i=1}^{n}\left(\frac{1}{P(a,b_{i})}-1\right)
\end{align*}
for $n\geq 1$.
\end{prop}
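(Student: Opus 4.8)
The plan is to work directly from the trigonometric representation (\ref{trig}), which expresses $P_{n}$ as a single ratio involving the function $q$, and to reduce the entire identity to algebra on that ratio. First I would write
\[
P_{n}(a;b_{1},b_{2},\ldots,b_{n})=\frac{q(a)}{q(a)+\displaystyle\sum_{i=1}^{n}q(b_{i})}\text{,}
\]
take reciprocals, and subtract $1$. The reciprocal is $\bigl(q(a)+\sum_{i=1}^{n}q(b_{i})\bigr)/q(a)$, and since $q(a)>0$ whenever $0<a<1$, subtracting $q(a)/q(a)=1$ leaves precisely $\sum_{i=1}^{n}q(b_{i})/q(a)$. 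This is the computational heart of the argument, and it is purely formal once (\ref{trig}) is in hand.

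The second step is to translate each summand $q(b_{i})/q(a)$ back into the language of the single-opponent James function. Here I would invoke the independence-from-irrelevant-alternatives calculation (\ref{irrelevant}), which already established that
\[
\frac{q(b)}{q(a)}=\frac{P(b,a)}{P(a,b)}\text{,}
\]
applied term by term with $b=b_{i}$. This immediately yields the first form of the claimed identity, $\sum_{i=1}^{n}P(b_{i},a)/P(a,b_{i})$.

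The third step is to pass to the second form, and for this I would simply quote the elementary identity (\ref{funid}), namely $P(b,a)/P(a,b)=1/P(a,b)-1$, again applied with $b=b_{i}$ for each $i$. Summing over $i$ gives $\sum_{i=1}^{n}\bigl(1/P(a,b_{i})-1\bigr)$, completing the chain.

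Honestly, there is no serious obstacle in this proposition: every ingredient has been assembled in the preceding paragraphs, and the proof is a three-line concatenation of (\ref{trig}), (\ref{irrelevant}), and (\ref{funid}). The only point requiring a word of care is the domain hypothesis, since dividing by $q(a)$ and using (\ref{funid}) both presuppose that $a$ and the $b_{i}$ avoid the degenerate values; under the blanket assumption of this section that all winning percentages lie in $(0,1)$, this is automatic. I would therefore present the argument essentially as the displayed computation immediately preceding the statement, perhaps noting explicitly that the middle equality is (\ref{irrelevant}) and the final one is (\ref{funid}).
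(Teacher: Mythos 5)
Your proposal is correct and follows essentially the same route as the paper: the displayed computation preceding the statement takes the reciprocal of (\ref{trig}), subtracts $q(a)/q(a)$, converts each $q(b_{i})/q(a)$ via (\ref{irrelevant}), and the second form is exactly (\ref{funid}) applied termwise. Your remark about the domain hypothesis is a sensible precaution and consistent with the paper's standing assumption that all winning percentages lie in $(0,1)$.
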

The intermediate identity
\[
\frac{1}{P_{n}(a;b_{1},b_{2},\ldots,b_{n})}-1=\sum_{i=1}^{n}\frac{q(b_{i})}{q(a)}
\]
that arose during the proof of this formula is also worth recording.\bigskip

To obtain our second major identity, consider $0<c<1$ and note that
\begin{align*}
\frac{1}{P_{n}(a;b_{1},b_{2},\ldots,b_{n})}-1&=\sum_{i=1}^{n}\frac{q(b_{i})}{q(a)}\\
&=\frac{q(c)}{q(a)}\left(\sum_{i=1}^{n}\frac{q(b_{i})}{q(c)}\right)\\
&=\frac{P(c,a)}{P(a,c)}\left(\frac{1}{P_{n}(c;b_{1},b_{2},\ldots,b_{n})}-1\right)\text{.}
\end{align*}
In other words, we have obtained the following result.
\begin{prop}[\textbf{Substitution Formula}]\label{subform}
\begin{align*}
\frac{1}{P_{n}(a;b_{1},b_{2},\ldots,b_{n})}-1&=\frac{P(c,a)}{P(a,c)}\left(\frac{1}{P_{n}(c;b_{1},b_{2},\ldots,b_{n})}-1\right)\\
&=\left(\frac{1}{P(a,c)}-1\right)\left(\frac{1}{P_{n}(c;b_{1},b_{2},\ldots,b_{n})}-1\right)
\end{align*}
for $n\geq 1$.  
\end{prop}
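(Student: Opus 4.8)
The plan is to derive the Substitution Formula as a short algebraic consequence of the intermediate identity
\[
\frac{1}{P_{n}(a;b_{1},b_{2},\ldots,b_{n})}-1=\sum_{i=1}^{n}\frac{q(b_{i})}{q(a)}
\]
recorded immediately after the Sum Formula, which rewrites the left-hand side entirely in terms of the ratio scale $q$. Because the entire computation is formal, I would not expect a genuine obstacle here; the only thing to watch is where the hypothesis $0<c<1$ is actually used.

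First I would start from that intermediate identity and insert a factor of $q(c)$ into the numerator and denominator of each summand. Since $0<c<1$ makes $q(c)$ a well-defined positive number, this lets me factor
\[
\sum_{i=1}^{n}\frac{q(b_{i})}{q(a)}=\frac{q(c)}{q(a)}\sum_{i=1}^{n}\frac{q(b_{i})}{q(c)}\text{.}
\]
The purpose of this rearrangement is that each of the two resulting factors already has a name from our earlier work.

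Second I would identify the factors. Applying the intermediate identity a second time, now with $c$ playing the role of the protagonist, shows that $\sum_{i=1}^{n}q(b_{i})/q(c)=1/P_{n}(c;b_{1},b_{2},\ldots,b_{n})-1$; this is legitimate precisely because $q(c)\neq 0$. For the prefactor, the single-opponent calculation carried out in (\ref{irrelevant}) gives $q(c)/q(a)=P(c,a)/P(a,c)$. Combining these two observations yields the first line of the proposition. To reach the second line, I would apply (\ref{funid}) in the form $P(c,a)/P(a,c)=1/P(a,c)-1$ and substitute. The only step that genuinely requires a hypothesis is the insertion of $q(c)$, which needs $c\neq 0$; the standing assumption $0<c<1$ covers this and simultaneously keeps the reciprocals involving $P(a,c)$ well defined, so no additional case analysis should be necessary.
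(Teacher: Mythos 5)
Your proposal is correct and follows essentially the same route as the paper: the authors also start from the intermediate identity $\frac{1}{P_{n}(a;b_{1},\ldots,b_{n})}-1=\sum_{i=1}^{n}q(b_{i})/q(a)$, factor out $q(c)/q(a)$, recognize the remaining sum as $\frac{1}{P_{n}(c;b_{1},\ldots,b_{n})}-1$, and identify $q(c)/q(a)$ with $P(c,a)/P(a,c)$ via (\ref{irrelevant}), with (\ref{funid}) giving the second line. No issues.
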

\noindent When $n=1$, this formula is analogous to Theorem 2 in \cite{luce}, which Luce later referred to as the \textit{product rule} (see \cite{luce2}).
\bigskip

Both the sum formula and the substitution formula can be applied in a variety of different ways.  For example, 
partition the set $\{b_{1},b_{2},\ldots,b_{n}\}$ into $k$ nonempty disjoint subsets $\mathcal{S}_{1},\mathcal{S}_{2},\ldots,\mathcal{S}_{k}$,
with $m_{j}=|\mathcal{S}_{j}|$ for $1\leq j\leq k$.  Let $P_{m_{j}}(a;\mathcal{S}_{j})$ denote the probability of a
protagonist with winning percentage $a$ defeating all
$m_{j}$ opponents with winning percentages in the set $\mathcal{S}_{j}$.
The sum formula shows that
\[
\frac{1}{P_{m_{j}}(a;\mathcal{S}_{j})}-1=\sum_{b_{i}\in\mathcal{S}_{j}}\frac{P(b_{i},a)}{P(a,b_{i})}
\]
for each $1\leq j\leq k$.  Consequently
\begin{align}
\frac{1}{P_{n}(a;b_{1},b_{2},\ldots,b_{n})}-1&=\sum_{i=1}^{n}\frac{P(b_{i},a)}{P(a,b_{i})}\nonumber\\
&=\sum_{j=1}^{k}\sum_{b_{i}\in\mathcal{S}_{j}}\frac{P(b_{i},a)}{P(a,b_{i})}\nonumber\\
&=\sum_{j=1}^{k}\left(\frac{1}{P_{m_{j}}(a;\mathcal{S}_{j})}-1\right)\text{,}\label{Sform1}
\end{align}
which may be viewed as a generalization of the sum formula.\bigskip

Many useful identities may be derived from applying both the sum and substitution formulae.  Equation (\ref{Sform1})
may be rewritten
\[
\frac{1}{P_{n}(a;b_{1},b_{2},\ldots,b_{n})}-\frac{1}{P_{m_{1}}(a;\mathcal{S}_{1})}=\sum_{j=2}^{k}\left(\frac{1}{P_{m_{j}}(a;\mathcal{S}_{j})}-1\right)\text{.}
\]
Taking $\mathcal{S}_{1}=\{b_{1}\}$ and $\mathcal{S}_{2}=\{b_{2},b_{3},\ldots,b_{n}\}$, we obtain
\begin{align*}
\frac{1}{P_{n}(a;b_{1},b_{2},\ldots,b_{n})}-\frac{1}{P(a,b_{1})}=\frac{1}{P_{n-1}(a;b_{2},b_{3},\ldots,b_{n})}-1\text{.}
\end{align*}
Applying the substitution formula, we see that
\begin{align*}
\frac{1}{P_{n}(a;b_{1},b_{2},\ldots,b_{n})}-\frac{1}{P(a,b_{1})}&=\frac{P(b_{1},a)}{P(a,b_{1})}\left(\frac{1}{P_{n-1}(b_{1};b_{2},b_{3},\ldots,b_{n})}-1\right)\\
&=\frac{P(b_{1},a)}{P(a,b_{1})}\left(\frac{1}{P_{n-1}(b_{1};b_{2},b_{3},\ldots,b_{n})}\right)-\frac{P(b_{1},a)}{P(a,b_{1})}\text{,}
\end{align*}
from which we obtain another formula that will prove essential to our later results.
\begin{prop}[\textbf{Reduction Formula}]\label{reductioprop}
\[
\frac{1}{P_{n}(a;b_{1},b_{2},\ldots,b_{n})}-1=\frac{P(b_{1},a)}{P(a,b_{1})}\left(\frac{1}{P_{n-1}(b_{1};b_{2},b_{3},\ldots,b_{n})}\right)
\]
for $n\geq 2$.
\end{prop}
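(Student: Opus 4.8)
The plan is to build the reduction entirely from the two identities already in hand, namely the Sum Formula (Proposition \ref{sumform}) and the Substitution Formula (Proposition \ref{subform}). The key structural feature of the statement is that the right-hand side involves $P_{n-1}$ with protagonist $b_{1}$ rather than $a$, so the substantive content is to \emph{change the protagonist} from $a$ to $b_{1}$; the Substitution Formula is precisely the device that effects this change, and everything else is bookkeeping.

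First I would apply the Sum Formula to $P_{n}(a;b_{1},b_{2},\ldots,b_{n})$ and peel off the $i=1$ term, writing $\frac{1}{P_{n}(a;b_{1},\ldots,b_{n})}-1=\frac{P(b_{1},a)}{P(a,b_{1})}+\sum_{i=2}^{n}\frac{P(b_{i},a)}{P(a,b_{i})}$. The trailing sum is, again by the Sum Formula, exactly $\frac{1}{P_{n-1}(a;b_{2},\ldots,b_{n})}-1$; at this stage the protagonist is still $a$. Next I would invoke the Substitution Formula with $c=b_{1}$ to rewrite that quantity as $\frac{P(b_{1},a)}{P(a,b_{1})}\left(\frac{1}{P_{n-1}(b_{1};b_{2},\ldots,b_{n})}-1\right)$, which finally installs $b_{1}$ as the protagonist.

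It then remains only to collect terms. Substituting the last expression back produces a free $+\frac{P(b_{1},a)}{P(a,b_{1})}$ coming from having peeled off the first term, together with a $-\frac{P(b_{1},a)}{P(a,b_{1})}$ arising from the $-1$ inside the Substitution Formula. I expect the tracking of these two stray terms to be the one spot where a sign slip is easy; but they cancel exactly, leaving $\frac{P(b_{1},a)}{P(a,b_{1})}\cdot\frac{1}{P_{n-1}(b_{1};b_{2},\ldots,b_{n})}$, which is the asserted identity.

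As an independent check I would verify the formula straight from the trigonometric representation (\ref{trig}). Since $\frac{1}{P_{n}(a;\ldots)}-1=\frac{1}{q(a)}\sum_{i=1}^{n}q(b_{i})$ and $\frac{1}{P_{n-1}(b_{1};\ldots)}=\frac{1}{q(b_{1})}\sum_{i=1}^{n}q(b_{i})$, while $\frac{P(b_{1},a)}{P(a,b_{1})}=q(b_{1})/q(a)$ by (\ref{irrelevant}), both sides collapse to $\frac{1}{q(a)}\sum_{i=1}^{n}q(b_{i})$ in a single line. This not only confirms the result but makes transparent that the only genuine ingredient beyond algebra is the protagonist-switching carried by the Substitution Formula.
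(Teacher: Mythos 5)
Your proof is correct and follows essentially the same route as the paper: split off the $b_{1}$ term from the sum formula, apply the substitution formula with $c=b_{1}$ to the remaining $P_{n-1}(a;b_{2},\ldots,b_{n})$ piece, and observe that the two stray copies of $P(b_{1},a)/P(a,b_{1})$ cancel. The closing one-line verification via $q$ is a nice sanity check but does not change the substance.
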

\noindent For future reference, note that this identity is still valid when $b_{i}=0$ for any or all $2\leq i\leq n$.  For $n=1$, the formula
reduces to (\ref{funid}) if we take $P_{0}$ to be identically $1$.

The reduction formula may in turn be rewritten
\[
\frac{1}{P_{n}(a;b_{1},b_{2},\ldots,b_{n})}-1=\frac{P(b_{1},a)}{P(a,b_{1})}\left(1+\left(\frac{1}{P_{n-1}(b_{1};b_{2},b_{3},\ldots,b_{n})}-1\right)\right)
\]
which, combined with the sum formula for $P_{n-1}$, yields another important formula.

\begin{prop}[\textbf{Shifted Sum Formula}]\label{shift}
\[
\frac{1}{P_{n}(a;b_{1},b_{2},\ldots,b_{n})}-1=\frac{P(b_{1},a)}{P(a,b_{1})}\left(1+\sum_{i=2}^{n}\frac{P(b_{i},b_{1})}{P(b_{1},b_{i})}\right)
\]
for $n\geq 2$.
\end{prop}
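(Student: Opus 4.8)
The plan is to derive this identity by combining the Reduction Formula (Proposition~\ref{reductioprop}) with the Sum Formula (Proposition~\ref{sumform}), precisely as the paragraph immediately preceding the statement suggests. The argument is entirely a matter of substitution, so I would expect no genuine obstacle; the only care required is in matching the hypotheses of the two formulae to the standing assumption of this section that all winning percentages lie in $(0,1)$.

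First I would invoke the Reduction Formula to write
\[
\frac{1}{P_{n}(a;b_{1},b_{2},\ldots,b_{n})}-1=\frac{P(b_{1},a)}{P(a,b_{1})}\left(\frac{1}{P_{n-1}(b_{1};b_{2},b_{3},\ldots,b_{n})}\right)\text{,}
\]
which holds for $n\geq 2$. Next I would peel off a unit from the reciprocal factor, using the trivial algebraic identity
\[
\frac{1}{P_{n-1}(b_{1};b_{2},b_{3},\ldots,b_{n})}=1+\left(\frac{1}{P_{n-1}(b_{1};b_{2},b_{3},\ldots,b_{n})}-1\right)\text{.}
\]

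The key step is then to apply the Sum Formula to the quantity $P_{n-1}(b_{1};b_{2},b_{3},\ldots,b_{n})$, treating $b_{1}$ as the protagonist and $b_{2},\ldots,b_{n}$ as its $n-1$ opponents, which gives
\[
\frac{1}{P_{n-1}(b_{1};b_{2},b_{3},\ldots,b_{n})}-1=\sum_{i=2}^{n}\frac{P(b_{i},b_{1})}{P(b_{1},b_{i})}\text{.}
\]
Substituting this sum back into the bracketed factor of the Reduction Formula produces the asserted expression directly. Since every winning percentage is assumed to belong to $(0,1)$, both the Reduction Formula and the Sum Formula apply without any boundary complications, so the three displayed lines chain together to yield the result with no further work.
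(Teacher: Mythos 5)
Your proposal is correct and follows exactly the paper's own derivation: apply the reduction formula, rewrite the reciprocal factor as $1+\left(\frac{1}{P_{n-1}(b_{1};b_{2},\ldots,b_{n})}-1\right)$, and then invoke the sum formula for $P_{n-1}$ with $b_{1}$ as protagonist. No differences worth noting.
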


\noindent This particular representation for $P_{n}$ provides the basis for one of the major results in Section \ref{S:odds}.\bigskip

We may obtain additional formulae for $P_{n}$ by iterating the reduction formula.  For example,
\begin{align*}
\frac{1}{P_{n}(a,b_{1},b_{2},\ldots,b_{n})}-1&=\frac{P(b_{1},a)}{P(a,b_{1})}\left(1+\left(\frac{1}{P_{n-1}(b_{1};b_{2},b_{3},\ldots,b_{n})}-1\right)\right)\\
&=\frac{P(b_{1},a)}{P(a,b_{1})}\left(1+\frac{P(b_{2},b_{1})}{P(b_{1},b_{2})}
\left(\frac{1}{P_{n-2}(b_{2};b_{3},b_{4},\ldots,b_{n})}\right)\right)\\
&=\frac{P(b_{1},a)}{P(a,b_{1})}\left(1+\frac{P(b_{2},b_{1})}{P(b_{1},b_{2})}
\left(1+\sum_{i=3}^{n}\frac{P(b_{i},b_{2})}{P(b_{2},b_{i})}\right)\right)\\
\end{align*}
for $n\geq 3$.  Applying the reduction formula a total of $n-1$ times, we obtain the following representation.

\begin{prop}[\textbf{Expanded Sum Formula}]\label{expand}
\begin{align*}
\frac{1}{P_{n}(a;b_{1},b_{2},\ldots,b_n)}-1=&\sum_{j=1}^{n}\prod_{i=1}^j\frac{P(b_{i},b_{i-1})}{P(b_{i-1},b_{i})}\\
=&\sum_{j=1}^{n}\prod_{i=1}^j\left(\frac{1}{P(b_{i-1},b_{i})}-1\right)
\end{align*}
for $n\geq 1$, with the understanding that $b_{0}=a$.
\end{prop}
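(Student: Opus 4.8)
The plan is to prove the identity by induction on $n$, feeding the reduction formula (Proposition~\ref{reductioprop}) into itself $n-1$ times, exactly as the preceding discussion anticipates, while carrying the convention $b_{0}=a$ throughout. For the base case $n=1$ the right-hand side is the single product $\prod_{i=1}^{1}\frac{P(b_{i},b_{i-1})}{P(b_{i-1},b_{i})}=\frac{P(b_{1},a)}{P(a,b_{1})}$, which is $\frac{1}{P(a,b_{1})}-1$ by (\ref{funid}); since $P_{1}=P$, this is the left-hand side.

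For the inductive step I would assume the formula for $n-1$ opponents (with an arbitrary protagonist) and apply the reduction formula to get $\frac{1}{P_{n}(a;b_{1},\ldots,b_{n})}-1=\frac{P(b_{1},a)}{P(a,b_{1})}\cdot\frac{1}{P_{n-1}(b_{1};b_{2},\ldots,b_{n})}$. Writing $\frac{1}{P_{n-1}(b_{1};b_{2},\ldots,b_{n})}=1+\bigl(\frac{1}{P_{n-1}(b_{1};b_{2},\ldots,b_{n})}-1\bigr)$ and invoking the inductive hypothesis on $P_{n-1}(b_{1};b_{2},\ldots,b_{n})$ --- now with $b_{1}$ playing the role of the protagonist --- expresses the bracketed difference as a sum of products over $b_{1},b_{2},\ldots,b_{n}$.

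The main obstacle is purely the index bookkeeping. Because the hypothesis is applied with $b_{1}$ as protagonist, its ``zeroth'' term is $b_{1}$ rather than $a$, so the inner products $\prod_{i=1}^{j}$ there run over the shifted data and must be reindexed relative to $b_{1}$, while the outer sum runs from $j=1$ to $n-1$. The decisive move is to absorb the prefactor $\frac{P(b_{1},a)}{P(a,b_{1})}=\frac{P(b_{1},b_{0})}{P(b_{0},b_{1})}$ into every term: acting on each shifted product it supplies the missing $i=1$ factor and so extends that product down to $i=1$, and acting on the leading $1$ it produces precisely the $j=1$ term $\frac{P(b_{1},b_{0})}{P(b_{0},b_{1})}$. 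After the accompanying shift $j\mapsto j+1$ the collection of terms is exactly $\sum_{j=1}^{n}\prod_{i=1}^{j}\frac{P(b_{i},b_{i-1})}{P(b_{i-1},b_{i})}$, closing the induction; the second displayed form then follows by rewriting each factor through (\ref{funid}).

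As a check, and as a slicker alternative that sidesteps the induction, I note that the computation in (\ref{irrelevant}) gives $\frac{P(b_{i},b_{i-1})}{P(b_{i-1},b_{i})}=\frac{q(b_{i})}{q(b_{i-1})}$, so each product \emph{telescopes} to $\frac{q(b_{j})}{q(a)}$ (using $b_{0}=a$) and the outer sum collapses to $\sum_{j=1}^{n}\frac{q(b_{j})}{q(a)}$. This is exactly the intermediate identity recorded immediately after the Sum Formula, so the proposition is equivalent to that identity; I would likely present the telescoping derivation as the cleanest route and relegate the iterated reduction to a remark.
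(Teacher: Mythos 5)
Your induction is exactly the paper's argument: the authors obtain the expanded sum formula precisely by iterating the reduction formula $n-1$ times, absorbing the prefactor $\frac{P(b_{1},a)}{P(a,b_{1})}$ into the parenthesized terms at each stage just as you describe. Your telescoping check via $\frac{P(b_{i},b_{i-1})}{P(b_{i-1},b_{i})}=\frac{q(b_{i})}{q(b_{i-1})}$ is also valid and correctly identifies the proposition with the intermediate identity $\frac{1}{P_{n}}-1=\sum_{i=1}^{n}q(b_{i})/q(a)$ recorded after the sum formula, though that is not the route the paper takes.
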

\bigskip

It makes sense for these various sum formulae to exist, as they imply that the probability associated with
a multi-opponent competition can be determined from
a certain number of single-opponent competitions, even when the winning percentages of the competitors are unknown.
What is perhaps surprising is that one only needs to consider $n$ such competitions.

The sum formula (Proposition \ref{sumform}) shows that the probability of a protagonist $A$ defeating $B_{1},B_{2},\ldots,B_{n}$
in a multi-opponent competition can be determined solely from the probabilities associated with the $n$ single-opponent competitions involving $A$.
The shifted sum formula (Proposition \ref{shift}) shows that the probability of $A$ defeating $B_{1},B_{2},\ldots,B_{n}$ can be determined solely from the probabilities associated with the $n$ single-opponent competitions involving any particular opponent $B_{i}$.
The expanded sum formula (Proposition \ref{expand}) demonstrates that the probability
of $A$ defeating $B_{1},B_{2},\ldots,B_{n}$ can be determined from the $n$ single-opponent
competitions $A$ versus $B_{1}$ and $B_{i}$ versus $B_{i+1}$ for $1\leq i\leq n-1$.  It is natural to wonder precisely what
combinations of $n$ single-opponent competitions are sufficient to determine $P_{n}(a;b_{1},b_{2},\ldots,b_{n})$.  As it turns out,
the answer to this question is as nice as one could possibly hope.  The following fact is a direct consequence of the
path formula (Theorem \ref{pathform}), which we shall prove momentarily.

\begin{thm}\label{tworep}
Suppose one knows the probabilities associated with $n$ single-opponent competitions
involving the competitors $A,B_{1},B_{2},\ldots,B_{n}$.  If every opponent $B_{i}$ may
be connected to $A$ by a sequence of these single-opponent competitions, then
these $n$ probabilities are sufficient to determine
the probability of  $A$ defeating $B_{1},B_{2},\ldots,B_{n}$.
\end{thm}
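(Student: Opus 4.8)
The plan is to reduce everything to the multiplicative structure of the function $q$ that is already exhibited by the trig formula (\ref{trig}). Since $P_{n}(a;b_{1},\ldots,b_{n})=q(a)/\bigl(q(a)+\sum_{i=1}^{n}q(b_{i})\bigr)$, the value of $P_{n}$ is completely determined once one knows each of the $n$ ratios $q(b_{i})/q(a)$. Moreover, knowing the outcome of a single-opponent competition between two competitors with winning percentages $x$ and $y$ amounts, by (\ref{funid}), to knowing the ratio $q(x)/q(y)=P(x,y)/P(y,x)$. Thus the real content of the theorem is that the $n$ prescribed single-opponent ratios determine all $n$ ratios $q(b_{i})/q(a)$.

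First I would encode the data as a graph $G$ whose vertices are the competitors $A,B_{1},\ldots,B_{n}$ and whose edges are the $n$ given single-opponent competitions, each edge recording the ratio of the $q$-values of its two endpoints. The hypothesis that every $B_{i}$ can be joined to $A$ by a sequence of these competitions says precisely that $G$ is connected. But $G$ has $n+1$ vertices and exactly $n$ edges, and a connected graph on $n+1$ vertices with $n$ edges is a tree; consequently there is a \emph{unique} path from $A$ to each $B_{i}$. Writing such a path as $A=v_{0},v_{1},\ldots,v_{k}=B_{i}$ and telescoping, one obtains
\[
\frac{q(b_{i})}{q(a)}=\prod_{j=1}^{k}\frac{q(v_{j})}{q(v_{j-1})},
\]
in which every factor on the right is known, since an edge of $G$ supplies both $q(x)/q(y)$ and its reciprocal and so the direction of traversal is immaterial. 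Hence each ratio $q(b_{i})/q(a)$ is determined, and substituting back into the trig formula determines $P_{n}(a;b_{1},\ldots,b_{n})$.

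The step I expect to require the most care is the combinatorial one: verifying that connectivity together with the edge count forces $G$ to be a tree, so that the path from $A$ to each $B_{i}$ exists and is unique, and then checking that the telescoping respects the orientation of each edge. The path formula (Theorem \ref{pathform}) will in fact package exactly this telescoping into a single closed expression for $1/P_{n}-1$ as a sum over the paths from $A$ to the $B_{i}$, with the expanded sum formula (Proposition \ref{expand}) being the special case in which the tree is the path $A-B_{1}-B_{2}-\cdots-B_{n}$; once that formula is in hand, the present theorem follows immediately.
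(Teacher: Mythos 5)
Your proposal is correct and follows essentially the same route as the paper: both arguments observe that connectivity plus the edge count forces the graph to be a tree, telescope the known ratios along the unique path from $A$ to each $B_{i}$ (your product of $q$-ratios is exactly the paper's repeated application of the substitution formula, since $P(x,y)/P(y,x)=q(x)/q(y)$), and then conclude via the sum formula, i.e.\ the representation (\ref{trig}). The only difference is cosmetic --- you work directly with $q$ where the paper phrases everything in terms of the ratios $P(x,y)/P(y,x)$ and packages the telescoping as the path formula.
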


Let $G$ be a graph with vertex set $\{A,B_{1},B_{2},\ldots,B_{n}\}$, where there is an edge between
two vertices if and only if we know the probability associated with the single-opponent competition
between them.  The hypotheses of Theorem \ref{tworep} imply that $G$ is connected graph with $n+1$ vertices and $n$ edges, which means that $G$ is
a tree (see \cite[Theorem 4.8]{cz}), for which we can think of $A$ as being the root.  
Suppose the unique path from $A$ to $B_{i}$ has length $k$, say
\begin{equation}\label{path}
A\rightarrow B_{\ell_{1}}\rightarrow B_{\ell_{2}}\rightarrow\cdots\rightarrow B_{\ell_{k}}=B_{i}\text{.}
\end{equation}
Applying the substitution formula $k-1$ times, we see that
\[
\frac{P(b_{i},a)}{P(a,b_{i})}
=\frac{P(b_{\ell_{1}},a)}{P(a,b_{\ell_{1}})}
\frac{P(b_{\ell_{2}},b_{\ell_{1}})}{P(b_{\ell_{1}},b_{\ell_{2}})}\cdots
\frac{P(b_{\ell_{k-1}},b_{\ell_{k-2}})}{P(b_{\ell_{k-2}},b_{\ell_{k-1}})}
\frac{P(b_{i},b_{\ell_{k-1}})}{P(b_{\ell_{k-1}},b_{i})}
\]
for each $1\leq i\leq n$.  Hence the sum formula yields the following representation for $P_{n}$.

\begin{thm}[\textbf{Path Formula}]\label{pathform}
\[
\frac1{P_n(a;b_{1},b_{2},\ldots,b_n)}-1=\sum_{i=1}^{n}\frac{P(b_{\ell_{1}},a)}{P(a,b_{\ell_{1}})}
\frac{P(b_{\ell_{2}},b_{\ell_{1}})}{P(b_{\ell_{1}},b_{\ell_{2}})}\cdots
\frac{P(b_{\ell_{k-1}},b_{\ell_{k-2}})}{P(b_{\ell_{k-2}},b_{\ell_{k-1}})}
\frac{P(b_{i},b_{\ell_{k-1}})}{P(b_{\ell_{k-1}},b_{i})}\text{,}
\]
where the indices $\ell_{1},\ell_{2},\ldots,\ell_{k}$, which depend on $i$, are defined as in (\ref{path}).
\end{thm}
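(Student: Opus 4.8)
The plan is to build the path formula directly on the two identities already established—the sum formula (Proposition \ref{sumform}) and the substitution formula (Proposition \ref{subform})—and to use the tree structure of $G$ only to pick, for each opponent, a canonical route along which to telescope. First I would record the sum formula in the form
\[
\frac1{P_n(a;b_{1},b_{2},\ldots,b_n)}-1=\sum_{i=1}^{n}\frac{P(b_{i},a)}{P(a,b_{i})}\text{,}
\]
so that the whole problem collapses to re-expressing each summand $\frac{P(b_{i},a)}{P(a,b_{i})}$ as a product taken along the unique $A$-to-$B_i$ path (\ref{path}).

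The key step is the telescoping. Since $G$ is a tree rooted at $A$ (as cited from \cite[Theorem 4.8]{cz}), each $B_i$ is joined to $A$ by a unique path $A\rightarrow B_{\ell_{1}}\rightarrow\cdots\rightarrow B_{\ell_{k}}=B_i$, whose interior vertices $B_{\ell_{1}},\ldots,B_{\ell_{k-1}}$ supply the substitution points. I would apply the substitution formula in its $n=1$ specialization
\[
\frac{1}{P(a,b)}-1=\frac{P(c,a)}{P(a,c)}\left(\frac{1}{P(c,b)}-1\right)
\]
first with $b=b_i$ and $c=b_{\ell_{1}}$ to peel off the first edge, then iterate with $c=b_{\ell_{2}},\ldots,b_{\ell_{k-1}}$—a total of $k-1$ times—to split the single ratio into the product of edge-ratios
\[
\frac{P(b_{\ell_{1}},a)}{P(a,b_{\ell_{1}})}\frac{P(b_{\ell_{2}},b_{\ell_{1}})}{P(b_{\ell_{1}},b_{\ell_{2}})}\cdots\frac{P(b_{i},b_{\ell_{k-1}})}{P(b_{\ell_{k-1}},b_{i})}\text{.}
\]
Substituting these products back into the sum formula, one summand per opponent, yields the stated identity.

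The main thing to verify—rather than a genuine obstacle—is that the telescoping is both legitimate and exact. Legitimacy follows from the standing hypothesis that all winning percentages lie in $(0,1)$, so every denominator $P(a,c)$ and $P(c,b)$ is nonzero and the substitution formula applies at each stage; exactness is the observation underlying (\ref{irrelevant}) that $\frac{P(b,a)}{P(a,b)}=\frac{q(b)}{q(a)}$, which makes each substitution peel off precisely one edge via $q(b)/q(a)=[q(c)/q(a)][q(b)/q(c)]$. Because the path vertices $\ell_{1},\ldots,\ell_{k}$ depend on $i$, the only real care needed is index bookkeeping: confirming that the interior vertices introduced by successive substitutions coincide with the tree path (\ref{path}) and that the final factor terminates at $B_i$ itself. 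No analytic input beyond the earlier propositions is required.
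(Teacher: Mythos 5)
Your proposal is correct and follows essentially the same route as the paper: the authors likewise invoke the tree structure to obtain the unique path (\ref{path}), apply the substitution formula $k-1$ times to expand $\frac{P(b_{i},a)}{P(a,b_{i})}$ into the product of edge-ratios along that path, and then sum over $i$ via the sum formula. Your added remark about the telescoping identity $q(b)/q(a)=[q(c)/q(a)][q(b)/q(c)]$ is consistent with how the substitution formula was derived and introduces nothing new.
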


\noindent With the appropriate interpretation, the sum formula, the shifted sum formula, and the expanded sum formula
may all be viewed as specific instances of the path formula.

\begin{ex}\label{ex1}
To illustrate the path formula further, consider the tree in Figure 1.  Since $B_{1}$, $B_{2}$, and $B_{7}$
connect directly to $A$, they contribute the terms
\[
\frac{P(b_{1},a)}{P(a,b_{1})}\text{,} \quad \frac{P(b_{2},a)}{P(a,b_{2})}\text{,}
\quad\text{and}\quad \frac{P(b_{7},a)}{P(a,b_{7})}
\]
respectively to the sum.  Since $B_{3}$ and $B_{4}$ both connect to $A$ via $B_{2}$, together they contribute
\[
\frac{P(b_{2},a)}{P(a,b_{2})}\frac{P(b_{3},b_{2})}{P(b_{2},b_{3})}+
\frac{P(b_{2},a)}{P(a,b_{2})}\frac{P(b_{4},b_{2})}{P(b_{2},b_{4})}\text{.}
\]
Since $B_{8}$ connects to $A$ via $B_{7}$ it contributes
\[
\frac{P(b_{7},a)}{P(a,b_{7})}\frac{P(b_{8},b_{7})}{P(b_{7},b_{8})}\text{.}
\]
Finally, since $B_{5}$ and $B_{6}$ connect to $A$ via $B_{4}$ and $B_{2}$, they contribute a total of
\[
\frac{P(b_{2},a)}{P(a,b_{2})}\frac{P(b_{4},b_{2})}{P(b_{2},b_{4})}\frac{P(b_{5},b_{4})}{P(b_{4},b_{5})}
+\frac{P(b_{2},a)}{P(a,b_{2})}\frac{P(b_{4},b_{2})}{P(b_{2},b_{4})}\frac{P(b_{6},b_{4})}{P(b_{4},b_{6})}\text{.}
\]
Therefore
\begin{align*}
\frac{1}{P_{8}(a;b_{1},b_{2},\ldots,b_{8})}&-1\\
=\frac{P(b_{1},a)}{P(a,b_{1})}&+\frac{P(b_{2},a)}{P(a,b_{2})}+
\frac{P(b_{2},a)}{P(a,b_{2})}\frac{P(b_{3},b_{2})}{P(b_{2},b_{3})}+
\frac{P(b_{2},a)}{P(a,b_{2})}\frac{P(b_{4},b_{2})}{P(b_{2},b_{4})}\\
&+\frac{P(b_{2},a)}{P(a,b_{2})}\frac{P(b_{4},b_{2})}{P(b_{2},b_{4})}\frac{P(b_{5},b_{4})}{P(b_{4},b_{5})}
+\frac{P(b_{2},a)}{P(a,b_{2})}\frac{P(b_{4},b_{2})}{P(b_{2},b_{4})}\frac{P(b_{6},b_{4})}{P(b_{4},b_{6})}\\
&+\frac{P(b_{7},a)}{P(a,b_{7})}+\frac{P(b_{7},a)}{P(a,b_{7})}\frac{P(b_{8},b_{7})}{P(b_{7},b_{8})}
\text{.}
\end{align*}
\end{ex}

\bigskip

\begin{figure}[ht]\label{fig1}
\begin{tikzpicture}
 [scale=.8,auto=left,every node/.style={draw,circle,minimum size=1cm}]
  \node (n0) at (10,10) {$A$};
  \node (n1) at (7,7)  {$B_{1}$};
  \node (n2) at (10,7)  {$B_{2}$};
  \node (n3) at (7,4)  {$B_{3}$};
  \node (n4) at (10,4)  {$B_{4}$};
  \node (n5) at (8.5,1)  {$B_{5}$};
  \node (n6) at (11.5,1)  {$B_{6}$};
  \node (n7) at (13,7) {$B_{7}$};
  \node (n8) at (13,4) {$B_{8}$};
  \foreach \from/\to in {n0/n1,n0/n2,n2/n3,n2/n4,n4/n5,n4/n6,n0/n7,n7/n8}
    \draw (\from) -- (\to);
\end{tikzpicture}
\caption{The graph representing Example \ref{ex1}.}
\end{figure}
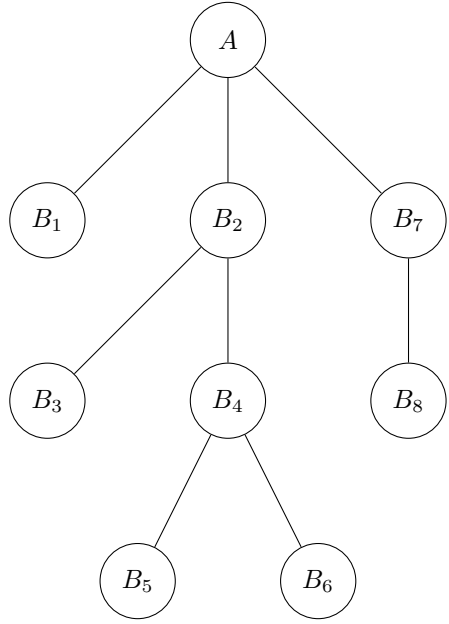

While the point of Theorem \ref{tworep} is that we do not need to know the winning
percentages of any of the competitors, the hypotheses of this theorem guarantee that one can
determine every competitor's winning percentage if one knows any single competitor's winning percentage.  Suppose, in the context
of Example \ref{ex1}, one knew the winning percentage $b_{4}$.  Since we know the value of $P(b_{4},b_{5})$,
the involutive property would allow us to determine $b_{5}=P(b_{4},P(b_{4},b_{5}))$.  Likewise, we
could determine both $b_{2}$ and $b_{6}$.  In this manner, one could use the winning percentage at
each vertex to find the winning percentages at all adjacent vertices, eventually determining every competitor's winning percentage.
\bigskip

We conclude this section with one last major identity, along with a few of its consequences.
Consider $P_{n}(a;b,c_{2},c_{3},\ldots,c_{n})$ and $P_{m}(b;a,d_{2},d_{3},\ldots,d_{m})$, where $a$ and $b$
both belong to the interval $(0,1)$. The reduction formula (Proposition \ref{reductioprop}) dictates that
\[
\frac{1}{P_{n}(a;b,c_{2},c_{3},\ldots,c_{n})}-1=\frac{1}{\delta_{1}}
\left(\frac{P(b,a)}{P(a,b)}\right)\text{,}
\]
where $\delta_{1}=P_{n-1}(b;c_{2},c_{3},\ldots,c_{n})$, and
\[
\frac{1}{P_{m}(b;a,d_{2},d_{3},\ldots,d_{m})}-1=\frac{1}{\delta_{2}}
\left(\frac{P(a,b)}{P(b,a)}\right)\text{,}
\]
where $\delta_{2}=P_{m-1}(a;d_{2},d_{3},\ldots,d_{m})$.  Consequently
\begin{equation}\label{solvethis}
\left(\frac{1}{P_{n}(a;b,c_{2},c_{3},\ldots,c_{n})}-1\right)\left(\frac{1}{P_{m}(b;a,d_{2},d_{3},\ldots,d_{m})}-1\right)=\frac{1}{\delta_{1}\delta_{2}}\text{.}
\end{equation}
Solving (\ref{solvethis}) for $P_{m}(b;a,d_{2},d_{3},\ldots,d_{m})$, we obtain the following result.

\begin{prop}[\textbf{Distorted Difference Formula}]\label{ddf}
\[
P_{m}(b;a,d_{2},d_{3},\ldots,d_{m})=\frac{1-P_{n}(a;b,c_{2},c_{3},\ldots,c_{n})}{1+\bigl(\frac{1}{\delta_{1}\delta_{2}}-1\bigr)P_{n}(a;b,c_{2},c_{3},\ldots,c_{n})}\text{,}
\]
where $\delta_{1}=P_{n-1}(b;c_{2},c_{3},\ldots,c_{n})$ and $\delta_{2}=P_{m-1}(a;d_{2},d_{3},\ldots,d_{m})$.
\end{prop}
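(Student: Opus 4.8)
The plan is to treat (\ref{solvethis}) as a single algebraic equation in the unknown quantity $P_{m}(b;a,d_{2},\ldots,d_{m})$ and simply solve for it. To keep the bookkeeping manageable, I would abbreviate $X=P_{n}(a;b,c_{2},\ldots,c_{n})$ and $Y=P_{m}(b;a,d_{2},\ldots,d_{m})$, and set $K=1/(\delta_{1}\delta_{2})$, so that (\ref{solvethis}) becomes
\[
\left(\frac{1}{X}-1\right)\left(\frac{1}{Y}-1\right)=K\text{.}
\]
Note that the conceptual work has already been done in the passage leading up to (\ref{solvethis}): the two applications of the reduction formula (Proposition \ref{reductioprop}) produce the reciprocal factors $P(b,a)/P(a,b)$ and $P(a,b)/P(b,a)$, whose product is $1$, leaving only $1/(\delta_{1}\delta_{2})$ on the right-hand side. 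Consequently nothing remains but to isolate $Y$.

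First I would divide through by the factor not involving $Y$ to obtain $1/Y-1=K/(1/X-1)$, then use $1/X-1=(1-X)/X$ to rewrite the right-hand side as $KX/(1-X)$. Adding $1$ and combining over the common denominator $1-X$ gives
\[
\frac{1}{Y}=\frac{1+(K-1)X}{1-X}\text{,}
\]
and inverting yields $Y=(1-X)/\bigl(1+(K-1)X\bigr)$. Since $K-1=1/(\delta_{1}\delta_{2})-1$, this is exactly the asserted formula once $X$ and $Y$ are restored to their full names.

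The one point requiring genuine care---and the only place the hypotheses $a,b\in(0,1)$ enter---is the legitimacy of the several reciprocals taken along the way. Because every winning percentage lies in $(0,1)$, the representation (\ref{trig}) forces each of $X$ and $Y$ to lie strictly between $0$ and $1$, so that $X\neq 0$, $X\neq 1$, and $Y\neq 0$; likewise each $\delta_{i}$ is a probability strictly between $0$ and $1$, so $\delta_{1}\delta_{2}\neq 0$ and $K$ is a well-defined positive number. The same reasoning shows that the final denominator is nonzero, since $1+(K-1)X=(1/Y)(1-X)>0$. I do not anticipate any real obstacle beyond matching the resulting expression to the stated one; the most error-prone step is the simplification $1-X+KX=1+(K-1)X$, which must be carried out so that the coefficient of $P_{n}(a;b,c_{2},\ldots,c_{n})$ in the denominator emerges as $\bigl(1/(\delta_{1}\delta_{2})-1\bigr)$ and not its negative.
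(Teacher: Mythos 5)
Your proposal is correct and matches the paper's argument: the paper likewise derives equation (\ref{solvethis}) from two applications of the reduction formula and then simply solves that equation for $P_{m}(b;a,d_{2},\ldots,d_{m})$, exactly as you do. Your explicit attention to the nonvanishing of the various denominators is a harmless elaboration of what the paper leaves implicit.
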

\noindent This formula may be viewed as an alternate generalization of James condition (c).  For example, taking $c_{2}=c_{3}=\ldots=c_{n}=b$ and $d_{2}=d_{3}=\ldots=d_{m}=a$,
we obtain
\begin{equation}\label{altc}
P_{m}(b;a,a,\ldots,a)=\frac{1-P_{n}(a;b,b,\ldots,b)}{1+(mn-1)P_{n}(a;b,b,\ldots,b)}\text{,}
\end{equation}
which similar in spirit to multi-James condition (D) for $k=1$.

In the single-opponent case, for $0<a<1$ and $0<b<1$, James condition (d) can be obtained by applying the involutive property three times and condition (c) twice (see \cite[Proposition 7]{hjm}).  The analogous argument, involving Proposition \ref{pseudo} and equation (\ref{altc}), gives rise to a different identity: the expression $P_{n}(a;b,b,\ldots,b)$ is equal to
\[
P_{n}\!\left(\frac{n^{2}(1-b)}{1+(n^{2}-1)(1-b)};\frac{n^{2}(1-a)}{1+(n^{2}-1)(1-a)},\ldots,\frac{n^{2}(1-a)}{1+(n^{2}-1)(1-a)}\right)\text{.}
\]
Combining this observation with condition (D), one obtains a weaker version of Proposition \ref{level}.

\section{Uniqueness of $P_{n}$}\label{S:odds}

Many of the formulae from Section \ref{S:formulae} allow us restate a multi-opponent James function $P_{n}$ in terms of the single-opponent James function $P=P_{1}$.  Much of the work in \cite{hjm} was devoted to finding new examples of Jamesian functions; that is, functions other than $P$ that satisfy the James conditions.  It might be reasonable to expect that substituting an arbitrary Jamesian function (or perhaps an arbitrary involutive Jamesian function) into one of the formulae from Section \ref{S:formulae} would lead to another class of functions that satisfy the multi-James conditions.  As it turns out, that hypothesis is entirely incorrect.\bigskip

Throughout this section, let $\{J_{n}\}$ denote a collection of functions that satisfy the multi-James conditions, as stated in Section \ref{S:formula}.
As part of this assumption, we stipulate that each $J_{n}$ has the same domain as the corresponding $P_{n}$ and has codomain $[0,1]$.
It suffices to restrict our attention to the situation where all the winning percentages belong to the interval $(0,1)$, as demonstrated by the following lemma.

\begin{lemma}\label{zerolem}
Let $\{J_{n}\}$ be a collection of functions that satisfy the multi-James conditions.
If
\[
 J_{n}(a;b_{1},b_{2},\ldots,b_{n})=P_{n}(a;b_{1},b_{2},\ldots,b_{n})
\]
whenever all the winning percentages belong to the interval $(0,1)$, then each
function $J_{n}$ is identical to the corresponding function $P_{n}$.
\end{lemma}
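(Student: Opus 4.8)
The plan is to argue by induction on $n$, using the hypothesis (agreement throughout $(0,1)^{n+1}$, for every member of the collection) together with the multi-James conditions to pin down the values of $J_n$ at the boundary of its domain, one family of boundary points at a time. Throughout I adopt the convention $J_0\equiv P_0\equiv 1$, which is consistent with condition~(B) for $n=1$ (that condition being the translation of James condition~(b)). Assume as inductive hypothesis that $J_m=P_m$ on the entire domain of $P_m$ for every $m<n$.

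The one boundary phenomenon that cannot be reached as a one-sided limit from the interior is a vanishing opponent, and this is exactly the situation condition~(B) is designed to handle. So I would first dispose of every point at which some $b_i=0$: by condition~(F) I may move a zero into the last slot, and then condition~(B) gives
\[
J_n(a;b_1,\ldots,b_{n-1},0)=J_{n-1}(a;b_1,\ldots,b_{n-1})=P_{n-1}(a;b_1,\ldots,b_{n-1})=P_n(a;b_1,\ldots,b_{n-1},0),
\]
the middle equality being the inductive hypothesis and the outer two being condition~(B) for $J$ and for $P$. (The reduced tuple lies in the domain of $P_{n-1}$ because the original tuple is not identically zero.) It therefore remains to treat points with $b_i\in(0,1]$ for every~$i$.

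For the remaining boundary points I would use monotonicity and a squeeze from the interior, relying on the continuity of the explicit formula (\ref{multi1}). Recall that $J_n$ is non-increasing in each $b_i$ (condition~(E) together with~(F)) and, as observed in the paper for $P_n$, non-decreasing in $a$; the latter follows from~(C) and~(E) using only the axioms, so it holds for $J_n$ as well. Since $J_n$ takes values in $[0,1]$, I handle the cases in an order chosen so that each squeeze lands on points where equality is already known:
\begin{enumerate}
\item all $b_i\in(0,1)$ and $a\in\{0,1\}$: here $P_n(a;b)$ equals $0$ or $1$ by (\ref{multi1}), and letting $a'\to a$ through $(0,1)$ with the $b_i$ fixed, monotonicity in $a$ traps $J_n(a;b)$ between $\lim_{a'\to a}P_n(a';b)$ and the corresponding endpoint of $[0,1]$, forcing $J_n(a;b)=P_n(a;b)$;
\item $a\in(0,1)$ and exactly one $b_j=1$: since $a$ and the remaining opponents are interior, the analogous squeeze in the variable $b_j$ (using that $P_n\to 0$ as $b_j\to 1^-$ in (\ref{multi1})) gives $J_n(a;b)=0=P_n(a;b)$;
\item $a=0$ and exactly one $b_j=1$: the same squeeze in $b_j$ now lands on points of type~(1), namely $a=0$ with all opponents interior, which have already been shown to agree with $P_n$, so again $J_n(a;b)=P_n(a;b)$.
\end{enumerate}
Since $a=1$ together with some $b_j=1$ would require two winning percentages equal to $1$ and hence lie outside the domain, this list exhausts the boundary, completing the induction.

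The main obstacle is not any single computation but the bookkeeping that keeps the squeeze arguments legitimate: each one-sided limit must terminate at points where $J_n=P_n$ has already been established, which is why the order above (zeros via~(B) first, then the extreme values of $a$ with interior opponents, and only afterwards an extreme opponent together with $a=0$) is essential. The genuinely non-limiting step is the vanishing-opponent reduction, where approaching $b_i=0$ from the interior yields only the one-sided inequality $J_n\ge P_n$; condition~(B) is precisely what supplies the matching bound there.
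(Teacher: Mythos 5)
Your argument is correct, and its skeleton --- eliminate vanishing opponents with conditions (B) and (F), then pin down the remaining boundary values by one-sided monotone limits from the interior, ordered so that each limit lands on points already settled --- is the same as the paper's. The genuine differences lie in two sub-cases and amount to a different choice of which axiom does the work. For a point with some $b_j=1$, the paper invokes condition (C) directly, bounding $J_{n}(a;b_{1},\ldots,b_{n})$ above by $1-J_{n}(1;a,b_{1},\ldots,b_{j-1},b_{j+1},\ldots,b_{n})=0$ once the value $1$ at $a=1$ has been established; you instead run a second monotone squeeze, this time in the variable $b_j$, which works equally well. For the all-zero-opponents case, the paper again uses (C), writing $J_{n}(a;0,\ldots,0)=1-n\cdot J_{n}(0;a,0,\ldots,0)=1$, whereas you iterate (B) down to the convention $J_{0}\equiv 1$; that convention is sanctioned by the paper's own remark that (B) generalizes (b) via $P_{0}\equiv 1$, but the paper's route has the small advantage of never needing to interpret condition (B) at $n=1$. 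The net effect is that your proof never touches condition (C) while the paper's never needs $J_{0}$. Your case analysis is exhaustive, and your closing observation that approaching $b_i=0$ from the interior yields only the one-sided inequality $J_{n}\geq P_{n}$ --- so that condition (B) is genuinely indispensable there --- is exactly right.
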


\begin{proof}
Conditions (B) and (F) allow us to disregard any of the $b_{i}$ that are $0$, so that
\[
 J_{n}(a;b_{1},b_{2},\ldots,b_{n})=P_{n}(a;b_{1},b_{2},\ldots,b_{n})
\]
whenever $0<a<1$ and $0\leq b_{i}<1$ for $1\leq i\leq n$, as long as at least one $b_{i}$ is nonzero.  
Hence, under these hypotheses,
\[
\lim_{a\rightarrow 0^{+}}J_{n}(a;b_{1},b_{2},\ldots,b_{n})=\lim_{a\rightarrow 0^{+}}P_{n}(a;b_{1},b_{2},\ldots,b_{n})=0
\]
and
\[
\lim_{a\rightarrow 1^{-}}J_{n}(a;b_{1},b_{2},\ldots,b_{n})=\lim_{a\rightarrow 1^{-}}P_{n}(a;b_{1},b_{2},\ldots,b_{n})=1\text{.}
\]
Thus condition (E) implies that $J_{n}(0;b_{1},b_{2},\ldots,b_{n})=0$ and $J_{n}(1;b_{1},b_{2},\ldots,b_{n})=1$ whenever
$0\leq b_{i}<1$, with at least one $b_{i}$ being nonzero.  Moreover, (E) implies that $J_{n}(1;0,0,\ldots,0)=1$.

Condition (C) implies that
\[
J_{n}(a;0,0,\ldots,0)=1-n\cdot J(0;a,0,0,\ldots,0)=1
\]
whenever $0<a<1$.  Likewise, if $0\leq a<1$ and exactly one term $b_{j}$ equals $1$, condition (C) implies that
\[
0\leq J_{n}(a;b_{1},b_{2},\ldots,b_{n})\leq 1-J_{n}(1;a,b_{1},b_{2},\ldots,b_{j-1},b_{j+1},\ldots,b_{n})=0\text{,}
\]
from which it follows that $J_{n}(a;b_{1},b_{2},\ldots,b_{n})=0$.  In other words, $J_{n}$ and $P_{n}$ agree for all values in their domain.
\end{proof}

For the remainder of this section, we will assume that all winning percentages belong to the interval $(0,1)$.
In this context, condition (E) implies that $0<J_{n}(a;b_{1},b_{2},\ldots,b_{n})<1$, so
we can define the sum formula and the substitution formula in the same manner as for the multi-opponent James functions:
\[
\frac{1}{J_{n}(a;b_{1},b_{2},\ldots,b_{n})}-1=\sum_{i=1}^{n}\frac{J(b_{i},a)}{J(a,b_{i})}
\]
and
\[
\frac{1}{J_{n}(a;b_{1},b_{2},\ldots,b_{n})}-1=\frac{J(c,a)}{J(a,c)}\left(\frac{1}{J_{n}(c;b_{1},b_{2},\ldots,b_{n})}-1\right)\text{,}
\]
where $J=J_{1}$ (see Propositions \ref{sumform} and \ref{subform}).  Likewise, the reduction formula may be defined
\[
\frac{1}{J_{n}(a;b_{1},b_{2},\ldots,b_{n})}-1=\frac{J(b_{1},a)}{J(a,b_{1})}\left(\frac{1}{J_{n-1}(b_{1};b_{2},b_{3},\ldots,b_{n})}\right)
\]
for $n\geq 2$ (see Proposition \ref{reductioprop}).  Our goal is to show that any one of these formulae, in the presence of the multi-James conditions, guarantees that $J_{n}=P_{n}$ for all $n$.\bigskip

Our next observation underlies all the subsequent results in this section.

\begin{lemma}\label{sublem}
Let $\{J_{n}\}$ be a collection of functions that satisfy the multi-James conditions.  If the substitution formula holds for $n=1$, then the function $J=J_{1}$
is identical to the James function $P=P_{1}$.
\end{lemma}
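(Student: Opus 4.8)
The plan is to recast the hypothesis as a single multiplicative identity and then pin down $J=J_{1}$ by evaluating that identity against a fixed reference percentage. The essential first step is to introduce $f(a,b)=\frac{1}{J(a,b)}-1$ and to recognize that the coefficient $\frac{J(c,a)}{J(a,c)}$ appearing in the $n=1$ substitution formula is of exactly the same shape. Indeed, multi-James condition (C) for $n=1$ asserts $J(c,a)=1-J(a,c)$, so that $\frac{J(c,a)}{J(a,c)}=\frac{1}{J(a,c)}-1=f(a,c)$. The substitution formula for $n=1$ therefore collapses to the cocycle relation
\[
f(a,b)=f(a,c)\,f(c,b)
\]
for all $a,b,c$ in $(0,1)$.

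From this relation I would read off the structure of $f$ by specialization. Since all winning percentages lie in $(0,1)$, condition (E) already guarantees $0<J(a,b)<1$, so $f(a,b)>0$ and is in particular never zero. Setting $c=a$ then forces $f(a,a)=1$, and setting $b=a$ gives $f(c,a)=1/f(a,c)$; together these say that $f$ is a multiplicative one-cocycle, whose values are completely determined by a single slice. To compute that slice I would route through the value $\tfrac{1}{2}$: condition (A) for $n=1$ reads $J(a,\tfrac{1}{2})=a$, hence $f(a,\tfrac{1}{2})=\tfrac{1}{a}-1=q(a)^{-1}$, where $q$ is as in (\ref{qform}). Using $f(\tfrac{1}{2},b)=1/f(b,\tfrac{1}{2})=q(b)$ together with the cocycle relation at $c=\tfrac{1}{2}$ yields
\[
f(a,b)=f\!\left(a,\tfrac{1}{2}\right)f\!\left(\tfrac{1}{2},b\right)=\frac{q(b)}{q(a)}\text{.}
\]
Unwinding the definition of $f$ gives $J(a,b)=\frac{q(a)}{q(a)+q(b)}$, which is precisely the representation (\ref{trig}) of $P=P_{1}$, so $J=P$.

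The one genuinely substantive move is the opening reduction: one must notice that condition (C) converts the apparently asymmetric factor $\frac{J(c,a)}{J(a,c)}$ into $f(a,c)$, turning the substitution formula into the clean multiplicative law $f(a,b)=f(a,c)\,f(c,b)$. After that the argument is a sequence of routine specializations, and it requires neither condition (B) nor (D), nor anything beyond the strict positivity of $f$ supplied by (E) and the normalization $f(a,\tfrac{1}{2})=q(a)^{-1}$ supplied by (A). No regularity assumption on $J$ is needed, since the cocycle identity holds pointwise for every triple and is immediately evaluated at $c=\tfrac{1}{2}$.
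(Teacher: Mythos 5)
Your proof is correct and follows essentially the same route as the paper's: both arguments use condition (C) to rewrite the ratio $\frac{J(c,a)}{J(a,c)}$ as $\frac{1}{J(a,c)}-1$, then specialize the $n=1$ substitution formula at $c=\tfrac{1}{2}$ and invoke condition (A) to conclude $\frac{1}{J(a,b)}-1=\frac{b(1-a)}{a(1-b)}$, hence $J=P$. Your cocycle packaging and the derivation of $f(c,a)=1/f(a,c)$ are a mild reorganization of the same computation, not a different method.
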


\begin{proof}
By hypothesis,
\[
\frac{1}{J(a,b)}-1=\frac{J(c,a)}{J(a,c)}\left(\frac{1}{J(c,b)}-1\right)=\frac{J(c,a)}{J(a,c)}\frac{J(b,c)}{J(c,b)}
\]
for all values of $a$, $b$, and $c$.  Condition (A) implies $J(a,\frac{1}{2})=a$ and $J(b,\frac{1}{2})=b$,
so taking $c=\frac{1}{2}$ yields the expression
\[
\frac{1}{J(a,b)}-1=\frac{b(1-a)}{a(1-b)}\text{.}
\]
Therefore
\[
J(a,b)=\frac{1}{1+\displaystyle\frac{b(1-a)}{a(1-b)}}=
\frac{a(1-b)}{a(1-b)+b(1-a)}=P(a,b)\text{,}
\]
as we had hoped to show.
\end{proof}

We now turn our attention to the sum formula.

\begin{prop}\label{jamessum}
Let $\{J_{n}\}$ be a collection of functions that satisfy the multi-James conditions.  If the sum formula holds for $\{J_{n}\}$, then $J_{n}=P_{n}$ for all $n$.
\end{prop}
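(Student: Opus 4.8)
The plan is to reduce the entire statement to the single-opponent case and then invoke Lemma~\ref{sublem}. Note first that the proposition is essentially immediate once one knows $J_{1}=P_{1}$: under that hypothesis the sum formula for $\{J_{n}\}$ reads
\[
\frac{1}{J_{n}(a;b_{1},b_{2},\ldots,b_{n})}-1=\sum_{i=1}^{n}\frac{P(b_{i},a)}{P(a,b_{i})}\text{,}
\]
which is exactly the sum formula for $P_{n}$ (Proposition~\ref{sumform}); since condition~(E) guarantees $0<J_{n}<1$, this pins down $J_{n}=P_{n}$ for every $n$. Thus all the content lies in proving $J=J_{1}=P$, and by Lemma~\ref{sublem} it is enough to establish the substitution formula for $n=1$.

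To produce that formula I would write $\rho(u,v)=\frac{J(v,u)}{J(u,v)}=\frac{1}{J(u,v)}-1$, where the interchangeability uses $J(v,u)=1-J(u,v)$, the $n=1$ instance of condition~(C); this also gives $\rho(u,v)\rho(v,u)=1$, and the substitution formula for $n=1$ becomes the multiplicative identity $\rho(a,b)=\rho(a,c)\rho(c,b)$. The key is to extract this identity from the sum formula for $n=2$ together with condition~(C). Fix $a,b_{1},b_{2}\in(0,1)$ and set $p=\rho(a,b_{1})$, $q=\rho(a,b_{2})$, and $s=\rho(b_{1},b_{2})$, so that the reversed quantities $\rho(b_{1},a)$, $\rho(b_{2},a)$, $\rho(b_{2},b_{1})$ are $1/p$, $1/q$, $1/s$. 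Rewriting each term of condition~(C) for $n=2$, namely $J_{2}(b_{1};a,b_{2})+J_{2}(b_{2};a,b_{1})=1-J_{2}(a;b_{1},b_{2})$, by means of the sum formula turns it into
\[
\frac{1}{1+\frac1p+s}+\frac{1}{1+\frac1q+\frac1s}=\frac{p+q}{1+p+q}\text{.}
\]

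I would then clear denominators and collect the result as a polynomial in $s$ with coefficients in $p$ and $q$. The crux of the argument is that this relation is expected to collapse to the perfect square
\[
p^{2}s^{2}-2pq\,s+q^{2}=(ps-q)^{2}=0\text{,}
\]
which forces $s=q/p$. Translating back, this reads $\rho(b_{1},b_{2})=\rho(a,b_{2})/\rho(a,b_{1})=\rho(b_{1},a)\rho(a,b_{2})$; since $a,b_{1},b_{2}$ range freely over $(0,1)$, this is precisely the substitution formula for $n=1$, with $a$ playing the role of the intermediate variable. Lemma~\ref{sublem} then yields $J=P$, and the opening observation completes the proof. The step I expect to be the main obstacle is the algebraic simplification producing $(ps-q)^{2}=0$: the whole argument depends on condition~(C) being a genuine constraint on $s$ rather than an identity, and the double root is exactly what one needs, since it determines $s$ uniquely and leaves no room for a non-James solution.
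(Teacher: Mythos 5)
Your proposal is correct and follows essentially the same route as the paper: both extract the $n=1$ substitution formula from condition (C) for $n=2$ combined with the sum formula, observe that the resulting relation collapses to a perfect square (your $(ps-q)^{2}=0$ is, after the change of variables $\alpha_{1}=p$, $\alpha_{2}=1/q$, $\alpha_{3}=s$ and clearing denominators, the paper's $(1-\alpha_{1}\alpha_{2}\alpha_{3})^{2}=0$), and then invoke Lemma~\ref{sublem} before reading off $J_{n}=P_{n}$ from the sum formula. The algebraic collapse you flag as the main obstacle does in fact occur, so the argument is complete.
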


\begin{proof}
Let $a$, $b$, and $c$ be arbitrary points in the interval $(0,1)$.  To simplify notation, define
\[
\alpha_{1}=\frac{J(b,a)}{J(a,b)}\text{,}\quad\alpha_{2}=\frac{J(a,c)}{J(c,a)}\text{,}
\quad\text{and}\quad\alpha_{3}=\frac{J(c,b)}{J(b,c)}\text{.}
\]
Condition (C), together with the sum formula, dictates that
\begin{align*}
1&=J_{2}(a;b,c)+J_{2}(b;a,c)+J_{2}(c;a,b)\\
&=\frac{1}{1+\alpha_{1}+\displaystyle\frac{1}{\alpha_{2}}}
+\frac{1}{1+\displaystyle\frac{1}{\alpha_{1}}+\alpha_{3}}
+\frac{1}{1+\alpha_{2}+\displaystyle\frac{1}{\alpha_{3}}}\\
&=1+\frac{-1-\alpha_{1}\alpha_{2}}{1+\alpha_{2}+\alpha_{1}\alpha_{2}}
+\frac{\alpha_{1}}{1+\alpha_{1}+\alpha_{1}\alpha_{3}}
+\frac{\alpha_{3}}{1+\alpha_{3}+\alpha_{2}\alpha_{3}}\\
&=1+\frac{-1+2\alpha_{1}\alpha_{2}\alpha_{3}-\alpha_{1}^{2}\alpha_{2}^{2}\alpha_{3}^{2}}
{(1+\alpha_{2}+\alpha_{1}\alpha_{2})(1+\alpha_{1}+\alpha_{1}\alpha_{3})(1+\alpha_{3}+\alpha_{2}\alpha_{3})}\\
&=1-\frac{(1-\alpha_{1}\alpha_{2}\alpha_{3})^{2}}{(1+\alpha_{2}+\alpha_{1}\alpha_{2})(1+\alpha_{1}+\alpha_{1}\alpha_{3})(1+\alpha_{3}+\alpha_{2}\alpha_{3})}
\text{.}
\end{align*}
Therefore $\alpha_{1}\alpha_{2}\alpha_{3}=1$, which implies that
\[
\frac{J(b,a)}{J(a,b)}=\frac{J(c,a)}{J(a,c)}\frac{J(b,c)}{J(c,b)}\text{.}
\]
Thus the substitution formula holds for $n=1$, so Lemma \ref{sublem} implies that $J=P$.  Consequently
\[
\frac{1}{J_{n}(a;b_{1},b_{2},\ldots,b_{n})}-1=\sum_{i=1}^{n}\frac{J(b_{i},a)}{J(a,b_{i})}
=\sum_{i=1}^{n}\frac{P(b_{i},a)}{P(a,b_{i})}=\frac{1}{P_{n}(a;b_{1},b_{2},\ldots,b_{n})}-1
\]
for all $n$, as we had hoped to show.
\end{proof}

Next we consider the substitution formula for arbitrary values of $n$.

\begin{prop}\label{jamesodd}
Let $\{J_{n}\}$ be a collection of functions that satisfy the multi-James conditions.  If the substitution formula holds for $\{J_{n}\}$,
then $J_{n}=P_{n}$ for all $n$.
\end{prop}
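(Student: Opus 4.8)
The plan is to peel off the single-opponent function first and then determine the dependence on the opponents. Since the substitution formula is assumed for every $n$, it holds in particular for $n=1$, which is precisely the hypothesis of Lemma~\ref{sublem}; hence $J=J_{1}=P$. The substitution factor therefore simplifies to $J(c,a)/J(a,c)=P(c,a)/P(a,c)=q(c)/q(a)$, and the substitution formula becomes
\[
q(a)\left(\frac{1}{J_{n}(a;b_{1},\ldots,b_{n})}-1\right)=q(c)\left(\frac{1}{J_{n}(c;b_{1},\ldots,b_{n})}-1\right)
\]
for all $a,c\in(0,1)$. The left side is thus independent of $a$; calling it $K_{n}(b_{1},\ldots,b_{n})$, and noting from conditions (E) and (F) that $K_{n}$ is positive and symmetric, we arrive at the normal form
\[
J_{n}(a;b_{1},\ldots,b_{n})=\frac{q(a)}{q(a)+K_{n}(b_{1},\ldots,b_{n})}.
\]

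It remains to prove $K_{n}(b_{1},\ldots,b_{n})=\sum_{i=1}^{n}q(b_{i})$, for then $J_{n}=P_{n}$ by comparison with (\ref{trig}). The essential point is that the substitution formula pins down only the dependence on the protagonist $a$; the dependence on the opponents must come from elsewhere, and condition (C) is the natural source. I would feed the normal form into condition (C) after choosing the protagonist cleverly: let $u_{*}=q(b_{k})$ be a value occurring with maximal multiplicity $m_{*}$ among $q(b_{1}),\ldots,q(b_{n})$, and apply condition (C) with opponents $b_{1},\ldots,b_{n}$ and a protagonist whose winning percentage also has $q$-value $u_{*}$. Each opponent of value $u_{*}$ then contributes a term built from the original opponent tuple, while each opponent of value $u_{j}\neq u_{*}$ contributes a term built from the tuple $M_{j}$ obtained by switching that opponent's value to $u_{*}$.

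The argument is then an induction on $\Phi=n-m_{*}$. Switching any $u_{j}\neq u_{*}$ to $u_{*}$ raises the maximal multiplicity to $m_{*}+1$, so $\Phi(M_{j})=\Phi-1$, and by the induction hypothesis $K_{n}(M_{j})=S-u_{j}+u_{*}$, where $S=\sum_{i}q(b_{i})$. Since $K_{n}$ enters the normal form fractional-linearly, substituting these known values collapses condition (C) to a single linear equation in the one remaining unknown $K_{n}(b_{1},\ldots,b_{n})$, which I expect to solve to $S$; the base case $\Phi=0$ (all opponents sharing one winning percentage) falls out of the same equation with no hypothesis required. The main obstacle is exactly this step: one must check that matching the protagonist to a most frequent opponent value really does linearize condition (C) with a single unknown, and that the switching operation is well-founded. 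The subtle part is that several opponent values may carry large multiplicity, which is why the correct inductive quantity is the maximal multiplicity $m_{*}$ rather than, say, the number of distinct opponent values; tracking $m_{*}$ guarantees that each auxiliary tuple $M_{j}$ is genuinely closer to a constant tuple and hence already known.
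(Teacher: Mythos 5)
Your proof is correct, and while the engine driving it is the same as the paper's --- condition (C) applied with the protagonist's winning percentage equal to a repeated opponent value, plus an induction anchored at the all-opponents-equal case --- the organization is genuinely different. The paper, after invoking Lemma \ref{sublem} exactly as you do, aims directly at the shifted sum formula (Proposition \ref{shift}) as the target representation and runs a backwards induction on the number of times the designated value $b_{1}$ appears among the opponents; your normal form $J_{n}(a;b_{1},\ldots,b_{n})=q(a)/(q(a)+K_{n}(b_{1},\ldots,b_{n}))$ instead isolates all of the remaining freedom into a single positive symmetric function $K_{n}$, and your induction is on $n-m_{*}$ with $m_{*}$ the maximal multiplicity. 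Your insistence on tracking the \emph{maximal} multiplicity (so that every switched tuple $M_{j}$ is strictly closer to constant) is the right safeguard and plays the role of the paper's fixed designated value $b_{1}$. The step you flagged as the main obstacle does go through: choosing the protagonist $a=b_{k}$ so that $q(a)=u_{*}$, and writing $K=K_{n}(b_{1},\ldots,b_{n})$ and $S=\sum_{i}q(b_{i})$, the $m_{*}$ opponents of value $u_{*}$ each contribute $u_{*}/(u_{*}+K)$ (their opponent multiset is again the original one), each opponent of value $u_{j}\neq u_{*}$ contributes $u_{j}/(u_{j}+K_{n}(M_{j}))=u_{j}/(S+u_{*})$ by the induction hypothesis, and the right-hand side of (C) is $K/(u_{*}+K)$; the resulting equation
\[
\frac{K-m_{*}u_{*}}{u_{*}+K}=\frac{S-m_{*}u_{*}}{S+u_{*}}
\]
reduces to $u_{*}(1+m_{*})(K-S)=0$, whence $K=S$ and $J_{n}=P_{n}$ by comparison with (\ref{trig}). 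What your version buys is a clean separation of the roles of the two hypotheses (the substitution formula fixes the dependence on the protagonist, condition (C) then fixes the dependence on the opponents); what the paper's version buys is that it never needs to introduce the auxiliary function $K_{n}$ and lands directly on the shifted sum formula, which it exploits elsewhere. In either argument the boundary cases with winning percentages in $\{0,1\}$ are disposed of by Lemma \ref{zerolem}, which you are implicitly relying on by working entirely in $(0,1)$.
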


\begin{proof}

Since the substitution formula holds for all $n$, Lemma \ref{sublem} implies that $J=P$.
Our strategy for $n\geq 2$ will be to show that $J_{n}$ may be represented using the analog of the shifted sum formula (Proposition \ref{shift}):
\begin{equation}\label{representation}
\frac{1}{J_{n}(a;b_{1},b_{2},\ldots,b_{n})}-1=\frac{J(b_{1},a)}{J(a,b_{1})}\left(1+\sum_{i=2}^{n}\frac{J(b_{i},b_{1})}{J(b_{1},b_{i})}\right)\text{,}
\end{equation}
which will guarantee that $J_{n}=P_{n}$. We will employ backwards induction, not on $n$ but on the total number of
times the winning percentage $b_{1}$ appears among the opponents.

Consider, first of all, an expression of the form
\[
J_{n}(a;b_{1},b_{1},\ldots,b_{1})\text{,}
\]
where the same winning percentage $b_{1}$ appears $n$ times among the opponents.  Condition (C) dictates that
\[
J_{n}(b_{1};b_{1},b_{1},\ldots,b_{1})=\frac{1}{n+1}
\]
and hence
\[
\frac{1}{J_{n}(b_{1};b_{1},b_{1},\ldots,b_{1})}-1=n\text{.}
\]
Therefore the substitution formula shows that
\begin{align*}
\frac{1}{J_{n}(a;b_{1},b_{1},\ldots,b_{1})}-1&=\frac{J(b_{1},a)}{J(a,b_{1})}\left(\frac{1}{J_{n}(b_{1};b_{1},b_{1},\ldots,b_{1})}-1\right)\\
&=\frac{J(b_{1},a)}{J(a,b_{1})}\bigl(1+(n-1)\bigr)\\
&=\displaystyle\frac{J(b_{1},a)}{J(a,b_{1})}\left(1+\sum_{i=2}^{n}\frac{J(b_{1},b_{1})}{J(b_{1},b_{1})}\right)\text{.}
\end{align*}
Thus (\ref{representation}) represents $J_{n}$ whenever $b_{1}$ appears $n$ times among the opponents.

Now suppose that (\ref{representation}) represents $J_{n}$ whenever the winning percentage $b_{1}$ appears at least $k+1$ times among the opponents.
Consider the expression
\[
J_{n}(b_{1};\underbrace{b_{1},b_{1},\ldots,b_{1}}_{k\text{ terms}},b_{k+1},b_{k+2},\ldots,b_{n})\text{.}
\]
It follows from condition (C) that
\begin{equation}\label{sumhere}
(k+1)J_{n}(b_{1};b_{1},b_{1},\ldots,b_{1},b_{k+1},b_{k+2},\ldots,b_{n})=1-\sum_{j=k+1}^{n}R_{n}(j)\text{,}
\end{equation}
where
\[
R_{n}(j)=J_{n}(b_{j};\underbrace{b_{1},b_{1},\ldots,b_{1}}_{k+1\text{ terms}},b_{k+1},b_{k+2},\ldots,b_{j-1},b_{j+1},\ldots,b_{n})
\]
for $k+1\leq j\leq n$.  Since $R_{n}(j)$ represents a function for which $b_{1}$ appears at least $k+1$ times among the opponents,
the induction hypothesis applies to each $R_{n}(j)$.  In particular,
\begin{align*}
\frac{1}{R_{n}(j)}-1&=\frac{J(b_{1},b_{j})}{J(b_{j},b_{1})}\left(1+k\cdot\frac{J(b_{1},b_{1})}{J(b_{1},b_{1})}+\sum_{\substack{i=k+1,\\i\neq j}}^{n}\frac{J(b_{i},b_{1})}{J(b_{1},b_{i})}\right)\\
&=\frac{J(b_{1},b_{j})}{J(b_{j},b_{1})}\left(k+1-\frac{J(b_{j},b_{1})}{J(b_{1},b_{j})}+\sum_{i=k+1}^{n}\frac{J(b_{i},b_{1})}{J(b_{1},b_{i})}\right)\\
&=-1+\frac{J(b_{1},b_{j})}{J(b_{j},b_{1})}\left(k+1+\sum_{i=k+1}^{n}\frac{J(b_{i},b_{1})}{J(b_{1},b_{i})}\right)\text{.}
\end{align*}
Therefore
\[
R_{n}(j)=\frac{J(b_{j},b_{1})}{J(b_{1},b_{j})}\left(k+1+\sum_{i=k+1}^{n}\frac{J(b_{i},b_{1})}{J(b_{1},b_{i})}\right)^{-1}\text{,}
\]
so
\begin{align*}
1-\sum_{j=k+1}^{n}R_{n}(j)&=1-\frac{\displaystyle\sum_{j=k+1}^{n}\frac{J(b_{j},b_{1})}{J(b_{1},b_{j})}}{k+1+\displaystyle\sum_{i=k+1}^{n}\frac{J(b_{i},b_{1})}{J(b_{1},b_{i})}}\\
&=\frac{k+1}{k+1+\displaystyle\sum_{i=k+1}^{n}\frac{J(b_{i},b_{1})}{J(b_{1},b_{i})}}
\text{.}
\end{align*}
Thus it follows from (\ref{sumhere}) that
\begin{align*}
\frac{1}{J_{n}(b_{1};b_{1},b_{1},\ldots,b_{1},b_{k+1},b_{k+2},\ldots,b_{n})}-1
&=\frac{k+1}{1-\displaystyle\sum_{j=k+1}^{n}R_{n}(j)}-1\\
&=\left(k+1+\sum_{i=k+1}^{n}\frac{J(b_{i},b_{1})}{J(b_{1},b_{i})}\right)-1\\
&=k+\sum_{i=k+1}^{n}\frac{J(b_{i},b_{1})}{J(b_{1},b_{i})}\text{.}
\end{align*}
Consequently the substitution formula shows that
\begin{multline*}
\smash{\frac{1}{J_{n}(a;\underbrace{b_{1},b_{1},\ldots,b_{1}}_{k\text{ terms}},b_{k+1},b_{k+2},\ldots,b_{n})}-1}
=\frac{J(b_{1},a)}{J(a,b_{1})}\left(k+\displaystyle\sum_{i=k+1}^{n}\frac{J(b_{i},b_{1})}{J(b_{1},b_{i})}\right)\\
=\frac{J(b_{1},a)}{J(a,b_{1})}\left(1+\displaystyle\sum_{i=2}^{k}\frac{J(b_{1},b_{1})}{J(b_{1},b_{1})}+\sum_{i=k+1}^{n}\frac{J(b_{i},b_{1})}{J(b_{1},b_{i})}\right)
\text{.}
\end{multline*}
Hence condition (F) implies that (\ref{representation}) represents $J_{n}$ whenever $b_{1}$ appears $k$ times among the opponents.
In other words, our induction argument is complete.
\end{proof}

A similar result applies to the reduction formula.

\begin{prop}
Let $\{J_{n}\}$ be a collection of functions that satisfy the multi-James conditions.  If the reduction formula holds for $\{J_{n}\}$,
then $J_{n}=P_{n}$ for all $n$.
\end{prop}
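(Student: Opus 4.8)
The plan is to follow the same two-stage strategy as in the proof of Proposition \ref{jamessum}: first show that the single-opponent function $J = J_1$ coincides with the James function $P$, and then bootstrap this to $J_n = P_n$ for all $n$ by induction on $n$. As in that proof, the entire argument may be carried out under the standing assumption that the winning percentages lie in $(0,1)$, since Lemma \ref{zerolem} extends any such conclusion to the full domain. The real work, and the only genuine obstacle, is the base case $J = P$: the reduction formula by itself only relates $J_n$ to $J_{n-1}$ and says nothing directly about $J_1$, so I must manufacture a relation purely among values of $J_1$.

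To do this I would combine condition (C) with the reduction formula at $n = 2$. Fix $a,b,c \in (0,1)$ and adopt the notation of Proposition \ref{jamessum}, namely $\alpha_1 = J(b,a)/J(a,b)$, $\alpha_2 = J(a,c)/J(c,a)$, and $\alpha_3 = J(c,b)/J(b,c)$. Using condition (C) at $n=1$ (that is, $J(c,a) = 1 - J(a,c)$ and its analogues) to rewrite each reciprocal $1/J(\cdot,\cdot)$ in terms of the $\alpha_j$, the reduction formula produces closed forms for the three terms $J_2(a;b,c)$, $J_2(b;a,c)$, and $J_2(c;a,b)$; here condition (F) lets me designate whichever opponent I please as the distinguished first argument $b_1$. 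Taking $a$ to play that role in the latter two terms makes them share the common denominator $1 + \alpha_2 + \alpha_1\alpha_2$, so that the instance of condition (C),
\[
J_2(a;b,c) + J_2(b;a,c) + J_2(c;a,b) = 1,
\]
collapses after cross-multiplication to the single identity $\alpha_1\alpha_2\alpha_3 = 1$. This is precisely the $n=1$ substitution formula $J(b,a)/J(a,b) = \bigl(J(c,a)/J(a,c)\bigr)\bigl(J(b,c)/J(c,b)\bigr)$, so Lemma \ref{sublem} yields $J = P$.

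With $J = P$ in hand, the second stage is a routine induction. The base case $J_1 = P_1$ is the statement just proved. Assuming $J_{n-1} = P_{n-1}$, the reduction formula for $\{J_n\}$ gives
\[
\frac{1}{J_n(a;b_1,\ldots,b_n)} - 1 = \frac{P(b_1,a)}{P(a,b_1)}\frac{1}{P_{n-1}(b_1;b_2,\ldots,b_n)},
\]
and the right-hand side equals $1/P_n(a;b_1,\ldots,b_n) - 1$ by the reduction formula for $P_n$ (Proposition \ref{reductioprop}); hence $J_n = P_n$ and the induction closes. I expect the algebra in the base case---specifically, confirming that condition (C) genuinely simplifies to $\alpha_1\alpha_2\alpha_3 = 1$ rather than to some messier constraint---to be the one place demanding care, while everything after the invocation of Lemma \ref{sublem} is mechanical.
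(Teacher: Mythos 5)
Your proof is correct, but your route to the key identity $\alpha_1\alpha_2\alpha_3=1$ differs from the paper's. You derive it from condition (C) at $n=2$: using the reduction formula (together with $J(c,a)=1-J(a,c)$ to rewrite each $1/J(\cdot,\cdot)$ as $1+\alpha$), the three terms become $J_2(a;b,c)=\bigl(1+\alpha_1+\alpha_1\alpha_3\bigr)^{-1}$, $J_2(b;a,c)=\alpha_1\alpha_2\bigl(1+\alpha_2+\alpha_1\alpha_2\bigr)^{-1}$, and $J_2(c;a,b)=\bigl(1+\alpha_2+\alpha_1\alpha_2\bigr)^{-1}$, and the constraint that they sum to $1$ does indeed collapse to $\alpha_1\alpha_2\alpha_3=1$ after cross-multiplication --- the algebra you flagged as the delicate point checks out. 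The paper gets the same identity more cheaply: it applies the reduction formula twice to the \emph{same} quantity with the opponents in the two possible orders, and condition (F) forces $J_2(a;b,c)=J_2(a;c,b)$, i.e.
\[
\frac{J(b,a)}{J(a,b)}\,\frac{1}{J(b,c)}=\frac{J(c,a)}{J(a,c)}\,\frac{1}{J(c,b)}\text{,}
\]
which is the $n=1$ substitution formula on the nose, with no use of condition (C) and essentially no computation. Your version costs a bit more algebra and invokes condition (C) at both $n=1$ and $n=2$, but it has the virtue of running exactly parallel to the proof of Proposition \ref{jamessum}; the paper's version is relevant to the closing remark about which multi-James conditions are actually needed, since it shows the base step for the reduction formula requires only (A) (via Lemma \ref{sublem}) and (F). Your concluding induction and the appeal to Lemma \ref{zerolem} match the paper's (unstated but routine) argument.
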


\begin{proof}
Let $a$, $b$, and $c$ be arbitrary points in the interval $(0,1)$.  The reduction formula dictates that
\[
\frac{1}{J_{2}(a;b,c)}-1=\frac{J(b,a)}{J(a,b)}\left(\frac{1}{J(b,c)}\right)\text{.}
\]
Likewise,
\[
\frac{1}{J_{2}(a;c,b)}-1=\frac{J(c,a)}{J(a,c)}\left(\frac{1}{J(c,b)}\right)\text{.}
\]
Condition (F) dictates that $J_{2}(a;b,c)=J_{2}(a;c,b)$, from which it follows that
\[
\frac{J(b,a)}{J(a,b)}=\frac{J(c,a)}{J(a,c)}\frac{J(b,c)}{J(c,b)}\text{.}
\]
Therefore Lemma \ref{sublem} implies that $J=P$.   A straightforward induction argument, based on the reduction formula, shows that $J_{n}=P_{n}$ for all $n$.
\end{proof}

One can identify several other properties which, in conjunction with the multi-James conditions, imply that $J_{n}=P_{n}$.  The following
observation, which is based on the proof of Theorem 1 in \cite{luce}, pertains to a property discussed in Section \ref{S:formula}.

\begin{prop}
Let $\{J_{n}\}$ be a collection of functions that satisfy the multi-James conditions.  If $\{J_{n}\}$ possesses the property of independence from irrelevant alternatives, namely
\begin{equation}\label{jind}
\frac{J_{n}(b;a,c_{2},c_{3},\ldots,c_{n})}{J_{n}(a;b,c_{2},c_{3},\ldots,c_{n})}=
\frac{J(b,a)}{J(a,b)}
\end{equation}
whenever all the winning percentages  belong to the interval $(0,1)$, then $J_{n}=P_{n}$ for all $n$.
\end{prop}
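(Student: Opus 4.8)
The plan is to show that the independence-from-irrelevant-alternatives hypothesis, combined with multi-James conditions (C) and (F), forces the sum formula to hold for the collection $\{J_n\}$; once that is in place, Proposition \ref{jamessum} immediately yields $J_n=P_n$ for every $n$. In particular, I would not prove $J=P$ as a separate preliminary step, since that conclusion is already packaged inside the proof of Proposition \ref{jamessum}.

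First I would fix $n\geq 2$ and winning percentages $a,b_{1},\ldots,b_{n}$ in $(0,1)$ (the case $n=1$ is immediate, since there the sum formula reduces to condition (C) for a single opponent). Condition (C) gives
\[
1-J_{n}(a;b_{1},b_{2},\ldots,b_{n})=\sum_{i=1}^{n}J_{n}(b_{i};a,b_{1},\ldots,b_{i-1},b_{i+1},\ldots,b_{n})\text{.}
\]
The crucial step is to rewrite each summand using (\ref{jind}). Regarding the shared opponents $\{b_{j}:j\neq i\}$ as the common alternatives $c_{2},\ldots,c_{n}$ (which is legitimate by condition (F)), the independence hypothesis yields
\[
\frac{J_{n}(b_{i};a,b_{1},\ldots,b_{i-1},b_{i+1},\ldots,b_{n})}{J_{n}(a;b_{1},b_{2},\ldots,b_{n})}=\frac{J(b_{i},a)}{J(a,b_{i})}\text{,}
\]
where I have again invoked (F) to identify the denominator $J_{n}(a;b_{i},\{b_{j}\}_{j\neq i})$ with $J_{n}(a;b_{1},b_{2},\ldots,b_{n})$.

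Substituting these expressions into the identity from (C) and factoring out the common term $J_{n}(a;b_{1},\ldots,b_{n})$, I would obtain
\[
1-J_{n}(a;b_{1},\ldots,b_{n})=J_{n}(a;b_{1},\ldots,b_{n})\sum_{i=1}^{n}\frac{J(b_{i},a)}{J(a,b_{i})}\text{,}
\]
and dividing through produces exactly the sum formula for $\{J_{n}\}$. Proposition \ref{jamessum} then completes the argument. The only delicate point is bookkeeping with condition (F): one must apply it twice, once to align the shared opponents so that (\ref{jind}) is applicable and once to recognize the denominator as $J_{n}(a;b_{1},\ldots,b_{n})$. I expect no genuine analytic obstacle, because restricting all winning percentages to $(0,1)$ forces $0<J_{n}<1$ through condition (E), so every division above is valid.
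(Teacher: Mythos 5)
Your proposal is correct and follows essentially the same route as the paper: both arguments combine condition (C) with the independence hypothesis (\ref{jind}) (using (F) for the bookkeeping of shared opponents) to deduce the sum formula for $\{J_{n}\}$, and then invoke Proposition \ref{jamessum}. The only cosmetic difference is the direction of the computation --- the paper starts from $\sum_{i=1}^{n}J(b_{i},a)/J(a,b_{i})$ and simplifies to $1/J_{n}-1$, while you start from the identity supplied by (C) and rearrange --- which is the same calculation read in reverse.
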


\begin{proof}
Consider a set $\{a,b_{1},b_{2},\ldots,b_{n}\}$ of winning percentages in the interval $(0,1)$.  Applying (\ref{jind}), along with conditions (F) and (C), we see that
\begin{align*}
\sum_{i=1}^{n}\frac{J(b_{i},a)}{J(a,b_{i})}&=
\sum_{i=1}^{n}\frac{J_{n}(b_{i};a,b_{1},b_{2},\ldots,b_{i-1},b_{i+1},\ldots,b_{n})}{J_{n}(a;b_{i},b_{1},b_{2},\ldots,b_{i-1},b_{i+1},\ldots,b_{n})}\\
&=\frac{\displaystyle\sum_{i=1}^{n}J_{n}(b_{i};a,b_{1},b_{2},\ldots,b_{i-1},b_{i+1},\ldots,b_{n})}{J_{n}(a;b_{1},b_{2},\ldots,b_{n})}\\
&=\frac{1-J_{n}(a;b_{1},b_{2},\ldots,b_{n})}{J_{n}(a;b_{1},b_{2},\ldots,b_{n})}=\frac{1}{J_{n}(a;b_{1},b_{2},\ldots,b_{n})}-1\text{.}
\end{align*}
Thus the sum formula holds for $\{J_{n}\}$, so Proposition \ref{jamessum} guarantees that $J_{n}=P_{n}$.
\end{proof}

Another property, introduced in tandem with independence from irrelevant alternatives, gives
rise to a similar result.

\begin{prop}
Let $\{J_{n}\}$ be a collection of functions that satisfy the multi-James conditions.  If the odds ratio
\begin{equation}\label{jodds}
\frac{J_{m}(a;c_{1},c_{2},\ldots,c_{m})\bigl(1-J_{n}(a;b_{1},b_{2},\ldots,b_{n})\bigr)}{\bigl(1-J_{m}(a;c_{1},c_{2},\ldots,c_{m})\bigr)J_{n}(a;b_{1},b_{2},\ldots,b_{n})}
\end{equation}
is independent of $a$ whenever all the winning percentages belong to the interval $(0,1)$, then $J_{n}=P_{n}$ for all $n$.
\end{prop}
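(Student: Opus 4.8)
The plan is to reduce the hypothesis to the substitution formula and then invoke Proposition \ref{jamesodd}. The first step is purely algebraic. Writing $x=J_{m}(a;c_{1},c_{2},\ldots,c_{m})$ and $y=J_{n}(a;b_{1},b_{2},\ldots,b_{n})$, the odds ratio (\ref{jodds}) equals $\frac{x(1-y)}{(1-x)y}$, which rearranges to
\[
\frac{\frac{1}{J_{n}(a;b_{1},b_{2},\ldots,b_{n})}-1}{\frac{1}{J_{m}(a;c_{1},c_{2},\ldots,c_{m})}-1}\text{.}
\]
Thus the hypothesis says precisely that this ratio of ``inverse-minus-one'' quantities is independent of $a$.

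Next I would pin down $J=J_{1}$ by taking $m=n=1$ with a single opponent in each slot and choosing the comparison opponent to be $\frac{1}{2}$. Condition (A) gives $J(a,\frac{1}{2})=a$, so $\frac{1}{J(a,\frac{1}{2})}-1=\frac{1-a}{a}$, and the hypothesis forces $\frac{1}{J(a,b)}-1=C(b)\cdot\frac{1-a}{a}$ for some constant $C(b)$ independent of $a$. Evaluating at $a=\frac{1}{2}$ and using condition (c) (so that $J(\frac{1}{2},b)=1-b$) identifies $C(b)=\frac{b}{1-b}$, whence $\frac{1}{J(a,b)}-1=\frac{b(1-a)}{a(1-b)}$ and therefore $J=P$.

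Then I would establish the substitution formula for $\{J_{n}\}$. Fixing opponents $b_{1},b_{2},\ldots,b_{n}$ and comparing against a single opponent $c$ (the case $m=1$), the hypothesis together with $J=P$ (so that $\frac{1}{J(a,c)}-1=q(c)/q(a)$) shows that $\frac{1}{J_{n}(a;b_{1},b_{2},\ldots,b_{n})}-1=L/q(a)$ for some constant $L$ independent of $a$. Since $\frac{J(c,a)}{J(a,c)}=q(c)/q(a)$, a one-line check gives
\[
\frac{J(c,a)}{J(a,c)}\left(\frac{1}{J_{n}(c;b_{1},b_{2},\ldots,b_{n})}-1\right)=\frac{q(c)}{q(a)}\cdot\frac{L}{q(c)}=\frac{L}{q(a)}=\frac{1}{J_{n}(a;b_{1},b_{2},\ldots,b_{n})}-1\text{,}
\]
so the substitution formula holds for every $n$. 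Proposition \ref{jamesodd} then yields $J_{n}=P_{n}$ for all $n$.

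The only delicate point is the opening algebraic rewriting: once one recognizes that the odds-ratio condition is exactly the statement that $\frac{1}{J_{n}(a;b_{1},b_{2},\ldots,b_{n})}-1$ is proportional (in $a$) to $1/q(a)$, the remainder is routine, reducing to evaluation at the convenient values $c=\frac{1}{2}$ and $a=\frac{1}{2}$ and to citing the earlier uniqueness results.
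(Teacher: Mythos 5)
Your proof is correct, and it follows the same high-level reduction as the paper: specialize the odds ratio to $m=1$, show the hypothesis forces the substitution formula, and then invoke Proposition \ref{jamesodd}. The difference is in how you extract the substitution formula. The paper observes that the $m=1$ odds ratio equals $\frac{J(a,c)}{J(c,a)}\bigl(\frac{1}{J_{n}(a;b_{1},\ldots,b_{n})}-1\bigr)$ and simply evaluates this $a$-independent quantity at $a=c$, which identifies the constant as $\frac{1}{J_{n}(c;b_{1},\ldots,b_{n})}-1$ and yields the substitution formula in one line, with no need to know $J$ explicitly. You instead first pin down $J=P$ by taking $m=n=1$ with comparison opponent $\tfrac{1}{2}$ (in effect re-proving the conclusion of Lemma \ref{sublem} by a different route, using conditions (A) and (C) rather than the substitution formula itself), and then use the explicit form $q(s)=s/(1-s)$ to verify the substitution formula. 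Both arguments are sound; the paper's $a=c$ evaluation is more economical and keeps the logical dependence cleaner (the identification $J=P$ is then delivered by Lemma \ref{sublem} inside Proposition \ref{jamesodd}), while your version has the minor virtue of making the determination of $J$ completely explicit. One cosmetic point: where you cite ``condition (c),'' you should cite the multi-James condition (C) specialized to $n=1$, since that is what is assumed of $\{J_{n}\}$; the content is the same.
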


\begin{proof}
By assumption, the odds ratio
\[
\frac{J(a,c)\bigl(1-J_{n}(a;b_{1},b_{2},\ldots,b_{n})\bigr)}{\bigl(1-J(a,c)\bigr)J_{n}(a;b_{1},b_{2},\ldots,b_{n})}=\frac{J(a,c)}{J(c,a)}\left(\frac{1}{J_{n}(a;b_{1},b_{2},\ldots,b_{n})}-1\right)
\]
takes on the same value, no matter the value of $a$.  Taking $a=c$, we see that
\[
\frac{J(a,c)}{J(c,a)}\left(\frac{1}{J_{n}(a;b_{1},b_{2},\ldots,b_{n})}-1\right)=\frac{1}{J_{n}(c;b_{1},b_{2},\ldots,b_{n})}-1\text{,}
\]
which is equivalent to the substitution formula.  Hence Proposition \ref{jamesodd} implies that $J_{n}=P_{n}$.
\end{proof}

We may summarize the conclusions of this section in the following manner.

\begin{thm}
Let $\{J_{n}\}$ be a collection of functions that satisfy the multi-James conditions. Any one of the following conditions implies that $J_{n}=P_{n}$ for all $n$:
\begin{enumerate}
\item The sum formula holds for $\{J_{n}\}$.
\item The substitution formula holds for $\{J_{n}\}$.
\item The reduction formula holds for $\{J_{n}\}$.
\item The collection $\{J_{n}\}$ possesses the property of independence from irrelevant alternatives, as stated in (\ref{jind}).
\item The odds ratio (\ref{jodds}) is independent of $a$.
\end{enumerate}
\end{thm}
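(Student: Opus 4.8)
The plan is to recognize that this theorem merely collects the five implications established individually in the preceding propositions of the section, so that the proof consists of invoking each one in turn: condition (1) is Proposition \ref{jamessum}, condition (2) is Proposition \ref{jamesodd}, condition (3) is the reduction-formula proposition, condition (4) is the proposition on independence from irrelevant alternatives, and condition (5) is the proposition on the odds ratio. Since every clause of the theorem has already been proved as a standalone statement, the genuine content is organizational: the theorem packages those five separate results into a single clean summary.

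A slightly more illuminating organization, which I would adopt, is to funnel all five conditions through the two keystone results, namely the sum formula (Proposition \ref{jamessum}) and the substitution formula (Proposition \ref{jamesodd}), both of which ultimately rest on Lemma \ref{sublem}. Conditions (1) and (2) are exactly these keystones. For condition (4), the argument in the relevant proposition shows that independence from irrelevant alternatives yields the sum formula for $\{J_{n}\}$, reducing matters to case (1). For condition (5), setting $a=c$ in the odds ratio shows that its independence from $a$ is equivalent to the substitution formula, reducing to case (2). For condition (3), one computes $J_{2}(a;b,c)$ and $J_{2}(a;c,b)$ from the reduction formula and applies condition (F) to extract the $n=1$ substitution identity $J(b,a)/J(a,b)=[J(c,a)/J(a,c)][J(b,c)/J(c,b)]$, whence Lemma \ref{sublem} gives $J=P$ and a short induction on the reduction formula finishes.

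The step carrying the real weight is Lemma \ref{sublem}, together with the algebraic identity in Proposition \ref{jamessum} forcing $\alpha_{1}\alpha_{2}\alpha_{3}=1$ and the backwards induction of Proposition \ref{jamesodd}; since all of these are already in hand, the only tasks remaining here are to confirm that conditions (4) and (5) genuinely collapse to the sum and substitution formulas and that the reduction case legitimately invokes condition (F). I expect no real obstacle, as each reduction is the one- or two-line manipulation already carried out in the corresponding proposition. The theorem is therefore best regarded as a summary, and its proof is a matter of tracing the logical dependencies back to Lemma \ref{sublem} rather than producing any new argument.
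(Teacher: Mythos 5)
Your reading is exactly right: the paper offers no separate argument for this theorem, presenting it immediately after the five propositions with the remark that it merely ``summarizes the conclusions of this section,'' so the proof is precisely the citation of Propositions \ref{jamessum}, \ref{jamesodd}, and the three subsequent propositions, all of which funnel through Lemma \ref{sublem} as you describe. Your proposal is correct and matches the paper's approach.
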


In closing, we remark that the proofs in this section did not require the full strength of all six multi-James conditions.  For example, condition (A) was only used to show that $J(a,\frac{1}{2})=a$ and condition (D) was not invoked at all.  We leave it to the reader to determine the minimal assumptions required for each result.

\section*{Acknowledgments}

We would like to thank Maximillian C.\ W.\ Bender for his many valuable suggestions, particularly relating to the structure of Section \ref{S:formulae}.

 \end{document}